\newtheorem{theorem}{Theorem}
\newtheorem{lemma}{Lemma}[section]
\newtheorem{proposition}{Proposition}[section]
\newtheorem{assumption}{Assumption}
\newtheorem*{conjecture*}{Conjecture}
\newtheorem*{assumption*}{Assumption}
\theoremstyle{remark}
\newtheorem*{remarks*}{Remarks}
\newtheorem*{remark*}{Remark}
\newcommand{\R}{\mathbb{R}} 
\newcommand{\C}{\mathbb{C}} 
\newcommand{\N}{\mathbb{N}} 
\newcommand{\eps}{\varepsilon} 
\newcommand{\dHsN}[2]{\| {#2} \|_{\dot{H}^{#1}}} 
\newcommand{\LpN}[2]{\left\| {#2} \right\|_{L^{#1}}} 
\newcommand{\nequiv}{\not \equiv}
\newcommand{\widebar}{\overline}
\newcommand{\diff}{\, d} 
\newcommand{\comment}[1]{}
\numberwithin{equation}{section}
\newcommand{\JJ}{\mathcal{J}_{\vB,\omega,\sigma}}
\newcommand{\JJJ}{\JJ^*}
\newcommand{\norm}[1]{\left \|#1 \right \|}
\newcommand{\LpNmu}[2]{\| {#2} \|_{L^{#1}_\mu}} 
\newcommand{\be}{\begin{equation}}
\newcommand{\ee}{\end{equation}}
\newcommand{\bes}{\begin{equation*}}
\newcommand{\ees}{\end{equation*}}
\newcommand{\eB}{{\mathbf{e}}}
\newcommand{\vB}{{\mathbf{v}}}
\newcommand{\pt}{\partial}
\newcommand{\Rs}{\mathsf{R}}
\newcommand{\ii}{\mathrm{i}}
\newcommand{\weakto}{\rightharpoonup}
\newcommand{\FF}{\mathcal{F}}
\def\section{\@startsection{section}{1}%
  \z@{1.5\linespacing\@plus\linespacing}{.5\linespacing}%
  {\normalfont\bfseries\large\centering}}
\begin{document}

\title[On Symmetry of Traveling Solitary Waves]{On Symmetry of Traveling Solitary Waves \\ for dispersion generalized NLS}

\begin{abstract}
We consider dispersion generalized nonlinear Schr\"odinger equations (NLS) of the form
$$
i \pt_t u = P(D) u - |u|^{2 \sigma} u,
$$
where $P(D)$ denotes a (pseudo)-differential operator of arbitrary order. As a main result, we prove symmetry results for traveling solitary waves in the case of powers $\sigma \in \N$. The arguments are based on Steiner type rearrangements in Fourier space. Our results apply to a broad class of NLS-type equations such as fourth-order (biharmonic) NLS, fractional NLS, square-root Klein-Gordon and half-wave equations.
\end{abstract}

\author[L. Bugiera]{Lars Bugiera}
\address{University of Basel, Department of Mathematics and Computer Science, Spiegelgasse 1, CH-4051 Basel, Switzerland.}
\email{lars.bugiera@unibas.ch}

\author[E. Lenzmann]{Enno Lenzmann}
\address{University of Basel, Department of Mathematics and Computer Science, Spiegelgasse 1, CH-4051 Basel, Switzerland.}
\email{enno.lenzmann@unibas.ch}

\author[A. Schikorra]{Armin Schikorra}
\address{University of Pittsburgh, Department of Mathematics, 301 Thackeray Hall, Pittsburgh, PA 15260, USA}
\email{armin@pitt.edu}

\author[J. Sok]{J\'er\'emy Sok}
\address{University of Basel, Department of Mathematics and Computer Science, Spiegelgasse 1, CH-4051 Basel, Switzerland.}
\email{jeremyvithya.sok@unibas.ch}

\maketitle


\section{Introduction and Main Results}

The aim of the present paper is to derive symmetry results for traveling solitary waves for nonlinear dispersive equations of nonlinear Schr\"odinger (NLS) type. As a model case in space dimension $n \geq 1$, we consider equations of the form
  \be \tag{gNLS} \label{eq:gNLS}
   \ii \partial_t u = P(D) u - |u|^{2\sigma}u
  \ee
 for functions $u : [0,T) \times \R^n \to \C$. Here $P(D)$ denotes a self-adjoint and constant coefficient (pseudo-)differential operator defined by multiplication in Fourier space as
 \be
 \widehat{(P(D) u)}(\xi) = p(\xi) \widehat{u}(\xi),
 \ee
where suitable assumptions on the multiplier $p(\xi)$ will be stated below. In fact, the class of allowed symbols $p(\xi)$ will be rather broad including e.\,g.~fractional and polyharmonic NLS, higher-order NLS with mixed dispersions, half-wave and square-root Klein-Gordon equations (see, e.\,g.~\cite{BoCa-18,BoHiLe-16,Fi-15,FiIlPa-02,GeLePoRa-18,KaSh-00,KrLeRa-13}) and also Subsection 5.1 below.

Let us first make with some general remarks. Due to the focusing nature of the nonlinearity in \eqref{eq:gNLS}, we expect the existence of solitary waves $u(t,x) = e^{\ii t \omega}Q(x)$. In fact, by the translational invariance exhibited by the problem at hand, we expect that \textbf{traveling solitary waves} exist, which by definition are solutions of the form
\be \label{eq:u_sol}
u(t,x) = e^{\ii \omega t} Q_{\omega, \vB} (x- \vB t)
\ee
with some non-trivial profile $Q : \R^n \to \C$ depending on the given parameters $\omega \in \R$ (frequency) and $\vB \in \R^n$ (velocity). However, except for the important but special case of classical NLS when $P(D) = -\Delta$ and its Galilean invariance (see \eqref{eq:Galilei} below), there is no known {\em boost symmetry}, which transforms a solitary wave at rest with $\vB = 0$ into a traveling solitary wave with $\vB \neq 0$ for a general NLS-type equation like \eqref{eq:gNLS}. More importantly, in the absence of an explicit boost transform, the symmetries of the profile function $Q_{\omega, \vB}$ remain elusive in general. Yet, by inspecting the known explicit case when $P(D)=-\Delta$, we may conjecture that the following symmetries are also present in the general case: Up to translation and complex phase, i.\,e., replacing $Q_{\omega, \vB}$ by $e^{\ii \theta} Q_{\omega, \vB}(\cdot + x_0)$ with constants $\theta \in \R$ and $x_0 \in \R^n$, we have that:
\begin{enumerate}
\item[(S1)] $Q_{\omega, \vB}$ is \textbf{cylindrically symmetric with respect to} $\vB \in \R^n$, $n \geq 2$, i.\,e., we have
$$
Q_{\omega, \vB}(x) = Q_{\omega, \vB}(\Rs x) \quad \mbox{for all $\Rs \in \mathrm{O}(n)$ with $\Rs \vB = \vB$}.
$$ 
\item[(S2)] We have the \textbf{conjugation symmetry} given by
$$
Q_{\omega, \vB}(x) = \overline{Q_{\omega, \vB}(-x)}.
$$
Thus $\mathrm{Re} \, Q_{\omega, \vB}: \R^n \to \R$ and $\mathrm{Im} \, Q_{\omega, \vB} : \R^n \to \R$ are even and odd functions, respectively.
\end{enumerate}
As our main results below, we will establish the symmetry properties (S1) and (S2)  for so-called \textbf{boosted ground states} $Q_{\omega, \vB}$ which are by definition obtained as optimizers for a certain variational problem. In fact, we will show that (under suitable assumptions) that all such boosted ground state {\em must} satisfy (S1) and (S2). Our arguments will be based on rearrangement techniques (Steiner symmetrizations) performed in Fourier space. The core of our argument to obtain such a sharp symmetry result will be based on a topological property of the set $\{ \xi \in \R^n : |\widehat{Q}_{\omega, \vB}(\xi)| > 0 \}$ combined with a recent rigidity result \cite{LeSo-18} obtained for the {\em Hardy--Littlewood majorant problem} in $\R^n$. A more detailed sketch of the proof will be given below.

\subsection{Setup of the Problem}
Let us formulate the assumptions needed for our result. We impose the following conditions on the operator $P(D)$ in \eqref{eq:gNLS}. 
\begin{assumption} \label{ass1}
The operator $P(D)$ has a real-valued and continuous symbol $p : \R^n \to \R$ that satisfies the following bounds
$$
A |\xi|^{2s} + c \leq p(\xi) \leq B |\xi|^{2s} \quad \mbox{for all $\xi \in \R^n$},
$$
with some constants $s \geq \frac 1 2, A> 0 ,B > 0$, and $c \in \R$.
\end{assumption}

Let us assume that $P(D)$ satisfies the assumption above.  We readily deduce the norm equivalence 
$$
\| u \|_{H^s}^2 =  \| (1-\Delta)^{s/2} u \|_{L^2}^2 \simeq \langle u, (P(D) + \lambda) u \rangle = \int_{\R^n} (p(\xi) + \lambda) |\widehat{u}(\xi)|^2 \, d \xi,
$$
where $\lambda > 0$ is a sufficiently large constant. Moreover, we notice that the problem \eqref{eq:gNLS} exhibits (formally at least) conservation of energy  and $L^2$-mass, which are given by
$$
E[u] = \frac{1}{2} \langle u, P(D) u \rangle - \frac{1}{2 \sigma+2} \| u \|_{L^{2 \sigma+2}}^{2 \sigma+2}, \quad M[u] = \| u \|_{L^2}^2.
$$ 
Furthermore, with the real number $s \geq \frac 1 2$ as in Assumption \ref{ass1}, we define the following exponent (not necessarily an integer number) given by
$$
\sigma_*(s,n) := \begin{dcases*} \frac{2s}{n-2s} & \quad if $s < n/2$, \\ +\infty & \quad if $s \geq n/2$, \end{dcases*} 
$$ 
which marks the threshold of energy-criticality for exponents, i.\,e., the range $1 \leq \sigma <  \sigma_*$ corresponds to the energy-subcritical case for problem \eqref{eq:gNLS}. In fact, we will focus on the range in the rest of this paper with some marginal comments on the energy-critical case $\sigma=\sigma_*$ (which of course can occur only if $s < n/2$).

We are interested in traveling solitary waves with finite energy for the model problem \eqref{eq:gNLS}. By plugging the ansatz \eqref{eq:u_sol} into \eqref{eq:gNLS}, we readily find that the profile $Q_{\vB, \omega} \in H^s(\R^n)$ has to be a weak solution of the nonlinear equation 
 \be \label{eq:Q}
P(D) Q_{ \omega, \vB} +  \ii v \cdot \nabla Q_{\omega, \vB} + \omega Q_{\omega, \vB} - |Q_{\omega, \vB}|^{2 \sigma}Q_{\omega, \vB} = 0. 
  \ee  
As briefly mentioned above, there exists a well-known `gauge transform' (corresponding to Galilean boosts in physical terms) for the classical Schr\"odinger, where we can reduce the general case $\vB \in \R^n$ to vanishing velocity $\vB=0$. More precisely, if we consider \eqref{eq:gNLS} with $P(D)=-\Delta$, the Galilean boost transform given by
\be \label{eq:Galilei}
Q(x) \mapsto e^{\frac{i}{2} \vB \cdot x} Q(x)
\ee 
reduces the analysis of \eqref{eq:Q} to the study of the nonlinear equation 
\be \label{eq:Qground}
{-\Delta} Q + \omega_\vB Q - |Q|^{2 \sigma} Q =0  \quad \mbox{with} \quad \omega_{\vB} = \omega + \frac {|\vB|^2}{4},
\ee
where the boost term $\ii v \cdot \nabla$ has been gauged away. An important feature of the Galilean transform \eqref{eq:Galilei} is that preserves the $L^2$-norm $\| Q_\vB \|_{L^2} = \|Q\|_{L^2}$; in fact, it is a unitary transform on $L^2(\R^n)$. 

However, for general dispersion operators $P(D) \neq -\Delta$, no such explicit boost transform in the spirit \eqref{eq:Galilei} is known to exist. Therefore, an alternative approach is needed to deal with more general $P(D)$ in both respects concerning existence and symmetries of non-trivial profiles $Q_{\vB}$.

\subsection{Existence of Traveling Solitary Waves}
We first recall an existence result from \cite{Hi-17} for non-trivial solutions $Q_{v, \omega} \in H^s(\R^n)$ of \eqref{eq:ground-state-equation}. To construct these solutions, we introduce a suitable variational setting as follows. For given $\vB \in \R^n$ and  $\omega \in \R$ (satisfying some conditions below), we define the Weinstein-type functional of the form
\be \label{eq:Weinstein-functional}
    \JJ (u) := \frac{ \left  \langle u, ( P_\vB(D) + \omega) u \right \rangle^{\sigma+1}}{ \| u \|_{L^{2 \sigma+2}}^{2 \sigma +2}}
     \ee
  where $u \in H^s(\R^n)$ with $u \not \equiv 0$. Here and in what follows, we set
\be
    P_\vB(D) := P(D) + \ii \vB \cdot \nabla ,
\ee
which has the multiplier $p_\vB(\xi) = p(\xi) - \vB \cdot \xi$. Recalling that $P(D)$ satisfies Assumption 1 with some $s \geq \frac 1 2$ and $A>0$, it is straightforward to check that
\be \label{def:Sigma_v}
\Sigma_\vB := \inf_{\xi \in \R^n} p_\vB(\xi) = \inf_{\xi \in \R^n} \left \{  p(\xi) -\vB \cdot \xi  \right \} > -\infty,
\ee
provided that either $s > \frac 1 2$ and $\vB \in \R^n$ arbitrary or $|\vB| \leq A$ in the special case $s=\frac 1 2$. We have the following existence result.

\begin{theorem}[Existence of Boosted Ground States \cite{Hi-17}] \label{thm:existence}
Let $n\geq 1$, $\vB \in \R^n$, and suppose that $P(D)$ satisfies Assumption 1 with some constants $s \geq \frac 1 2$ and $A > 0$, where if $s=1/2$, we also assume that $|\vB| < A$ holds.  

Then, for $0 < \sigma < \sigma_*$ and $\omega > -\Sigma_\vB$, every minimizing sequence for $\JJ$ is relatively compact in $H^s(\R^n)$ up to translations in $\R^n$. In particular, there exists some minimizer $Q_{\vB,\omega} \in H^s(\R^n)\setminus\{0\}$, i.\,e.,
  \bes
    \JJ (Q_{\vB,\omega}) = \inf_{u\in H^s(\R^n) \setminus \{0 \}} \JJ(u),
  \ees
 and $Q_{\vB, \omega}$ solves the profile equation \eqref{eq:Q}.
\end{theorem}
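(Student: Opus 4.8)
The plan is to prove compactness of minimizing sequences by a concentration--compactness argument in the spirit of Lions, using the scaling structure of the Weinstein functional $\JJ$ to supply the strict subadditivity that excludes dichotomy. The first thing I would record is the two-sided symbol bound
$$
\delta\,\langle\xi\rangle^{2s}\;\le\;p_\vB(\xi)+\omega\;\le\;\delta^{-1}\langle\xi\rangle^{2s}\qquad(\xi\in\R^n)
$$
for a suitable $\delta>0$: the lower bound follows by splitting into large $|\xi|$, where $A|\xi|^{2s}+c-\vB\cdot\xi$ (resp.\ $(A-|\vB|)|\xi|+c$ when $s=1/2$, using $|\vB|<A$) is coercive, and bounded $|\xi|$, where $p_\vB(\xi)+\omega\ge\Sigma_\vB+\omega>0$ by hypothesis; the upper bound is immediate from Assumption \ref{ass1} together with $|\vB\cdot\xi|\lesssim\langle\xi\rangle^{2s}$, which uses $s\ge1/2$. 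Hence the quadratic form $\mathcal{Q}(u):=\langle u,(P_\vB(D)+\omega)u\rangle$ satisfies $\mathcal{Q}(u)\simeq\|u\|_{H^s}^2$, so $\JJ$ is well defined and positive on $H^s(\R^n)\setminus\{0\}$, and the subcritical embedding $H^s(\R^n)\hookrightarrow L^{2\sigma+2}(\R^n)$ (valid since $2<2\sigma+2<\frac{2n}{n-2s}$ when $s<n/2$, and for every finite exponent when $s\ge n/2$) yields $\JJJ:=\inf\JJ>0$.

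Since $\JJ(cu)=\JJ(u)$ for every scalar $c\ne0$, I would normalize a given minimizing sequence $(u_m)$ so that $\|u_m\|_{L^{2\sigma+2}}=1$; then $\mathcal{Q}(u_m)\to I:=(\JJJ)^{1/(\sigma+1)}$ and, by the norm equivalence, $(u_m)$ is bounded in $H^s(\R^n)$. Writing $I_\lambda:=\inf\{\mathcal{Q}(u):\|u\|_{L^{2\sigma+2}}^{2\sigma+2}=\lambda\}$, the substitution $u=\lambda^{1/(2\sigma+2)}v$ gives the homogeneity $I_\lambda=I\,\lambda^{1/(\sigma+1)}$; since $1/(\sigma+1)\in(0,1)$, the map $\lambda\mapsto I_\lambda$ is \emph{strictly} subadditive, $I_{a+b}<I_a+I_b$ for $a,b>0$, and this is exactly the rigidity that will prevent loss of compactness by splitting.

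Next I would exclude vanishing: if $\sup_{y\in\R^n}\int_{B(y,R)}|u_m|^2\to0$ for some $R>0$, Lions' lemma forces $u_m\to0$ in $L^{2\sigma+2}(\R^n)$, contradicting the normalization, so there are $y_m\in\R^n$ and $\delta_0>0$ with $\int_{B(0,1)}|u_m(\cdot+y_m)|^2\ge\delta_0$. Replacing $u_m$ by $u_m(\cdot+y_m)$ (which changes neither $\mathcal{Q}$ nor the $L^{2\sigma+2}$ norm) and passing to a subsequence, $u_m\weakto u$ in $H^s$ with $u\ne0$ by Rellich compactness on $B(0,1)$. Writing $u_m=u+r_m$ with $r_m\weakto0$, the Brezis--Lieb lemma gives $1=\|u\|_{L^{2\sigma+2}}^{2\sigma+2}+\|r_m\|_{L^{2\sigma+2}}^{2\sigma+2}+o(1)$, while expanding $\mathcal{Q}$ and pairing $r_m\weakto0$ in $H^s$ against the fixed element $(P_\vB(D)+\omega)u\in H^{-s}$ kills the cross term, so $\mathcal{Q}(u_m)=\mathcal{Q}(u)+\mathcal{Q}(r_m)+o(1)$. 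Along a further subsequence set $a:=\|u\|_{L^{2\sigma+2}}^{2\sigma+2}\in(0,1]$, $b:=\lim_m\|r_m\|_{L^{2\sigma+2}}^{2\sigma+2}\ge0$ (so $a+b=1$) and $L:=\lim_m\mathcal{Q}(r_m)\ge0$; then $\mathcal{Q}(u)\ge I_a$ and $L\ge I_b$ by definition and continuity of $\lambda\mapsto I_\lambda$, whence
$$
I=\mathcal{Q}(u)+L\;\ge\;I_a+I_b\;=\;I\bigl(a^{1/(\sigma+1)}+b^{1/(\sigma+1)}\bigr)\;\ge\;I\,(a+b)^{1/(\sigma+1)}=I .
$$
Equality throughout together with strict concavity of $t\mapsto t^{1/(\sigma+1)}$ forces $b=0$ (as $a>0$), hence $a=1$, $L=0$ and $\mathcal{Q}(r_m)\to0$; by the norm equivalence $r_m\to0$ in $H^s$, so $u_m\to u$ and $\JJ(u)=\JJJ$. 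The same reasoning applied to an arbitrary subsequence shows every minimizing sequence is relatively compact in $H^s(\R^n)$ modulo translations, and writing the Euler--Lagrange equation for $\JJ$ at $u$ and rescaling $u\mapsto cu$ to normalize the Lagrange multiplier (legitimate by scale invariance of $\JJ$) produces $Q_{\vB,\omega}$ solving \eqref{eq:Q}.

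I expect the principal obstacles to be, first, the two-sided symbol estimate in the preliminary step --- this is precisely where the assumptions $\omega>-\Sigma_\vB$ and, for $s=1/2$, $|\vB|<A$ are consumed, the strict inequality being needed for coercivity of $\mathcal{Q}$ --- and, second, the book-keeping in the last step that converts strict subadditivity of $\lambda\mapsto I_\lambda$ into strong $H^s$ convergence. By contrast, the nonlocality of $P(D)$ causes no real trouble here: the only decomposition used is the weak-limit splitting $u_m=u+r_m$, for which the quadratic cross term is controlled by a single $H^s$--$H^{-s}$ duality pairing, so no commutator or off-diagonal kernel estimates for $P(D)$ are required.
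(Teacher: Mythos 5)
Your proposal is correct and arrives at the result, but the paper deliberately takes a different route for the compactness step, so it is worth recording the contrast. For the quadratic-form coercivity, the normalization $\norm{u_m}_{L^{2\sigma+2}}=1$, the Br\'ezis--Lieb splitting of the $L^{2\sigma+2}$ norm, the weak decomposition $\mathcal{Q}(u_m)=\mathcal{Q}(u)+\mathcal{Q}(r_m)+o(1)$, and the final passage from equality to strong convergence, your argument matches the paper step for step; your strict subadditivity of $\lambda\mapsto I_\lambda=I\,\lambda^{1/(\sigma+1)}$ is just the paper's elementary inequality $(x+y)^{\sigma+1}\geq x^{\sigma+1}+y^{\sigma+1}$ read through the Legendre-type substitution, and the rescaling to pass from the Euler--Lagrange equation of $\JJ$ to \eqref{eq:Q} is identical. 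Where you diverge is in how a nonzero weak limit (up to translation) is extracted: you rule out vanishing via Lions' lemma and then invoke Rellich on a ball, whereas the paper explicitly avoids concentration--compactness and instead combines the $pqr$-lemma of Fr\"ohlich--Lieb--Loss (to get a uniform lower bound on $|\{|u_m|>\eta\}|$) with the Bellazzini--Frank--Visciglia compactness-modulo-translations lemma in $\dot H^s$ (a Lieb-type lemma). Both toolkits are standard and yield the same intermediate conclusion; the paper's choice packages the translation-extraction into a single cited lemma and sidesteps the usual compactness/vanishing/dichotomy trichotomy, while your route is more self-contained but requires you to separately observe that strict subadditivity excludes dichotomy (which you do correctly via $b=0$). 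Two small points worth making explicit if you write this out in full: (i) for $L\geq I_b$ you should note that $\mathcal{Q}(r_m)\geq I\,\norm{r_m}_{L^{2\sigma+2}}^2$ follows from the definition of $\JJJ$ applied to $r_m$ when $r_m\not\equiv0$, and the case $r_m\equiv0$ along a subsequence is trivial; (ii) your two-sided symbol bound should be stated with $p_\vB(\xi)+\omega\geq\Sigma_\vB+\omega>0$ used only on a compact set of $\xi$, with coercivity for large $|\xi|$ handled by $s\geq1/2$ (and $|\vB|<A$ if $s=1/2$), which you do sketch correctly.
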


\begin{remarks*}
1) Note that for the borderline case when $s=\frac 1 2$ and $|v| = A$ we still have that the $\inf_{0\nequiv f\in H^s(\R^n)} \JJ(f) > -\infty$, but we do not expect this infimum to be attained. For such non-existence result for the (important) special case of the half-wave equations when $P(D)=\sqrt{-\Delta}$ and $|v| \geq 1$, we refer to \cite{BeGeLeVi-19}.

2) Clearly, the variational ansatz using the functional $\JJ$ will break down if $P(D)$ satisfies the bounds in Assumption 1 with some $0  < s < 1/2$. In this case, the boost term $\ii v \cdot \nabla$ cannot be treated as a perturbation of $P(D)$. In this case, we conjecture that the profile equation \eqref{eq:Q} has only trivial solutions in $H^{1/2}(\R^n)$.  

3) The infimum $\Sigma_\vB$ defined in \eqref{def:Sigma_v} corresponds to the bottom of the essential spectrum of the self-adjoint operator $P_v(D)$ acting on $L^2(\R^n)$ with domain $H^{2s}(\R^n)$. For the specific choices $P(D)=(-\Delta)^s$ and $P(D)=(-\Delta + 1)^{s}$, the number $\Sigma_\vB$ can be explicitly calculated using the Legendre transform of the convex maps $\xi \mapsto |\xi|^{2s}$ and $\xi \mapsto (|\xi|^{2s}+1)^s$, respectively. For  details on this, we refer to \cite{Hi-17}.

4) See also \cite{KrLeRa-13, HoSi-15, NaRa-18}, where the existence of boosted ground states for NLS type equations were shown by concentration-compactness methods for fractional NLS when $P(D)=(-\Delta)^s$ in the range $s \in [\frac 1 2, 1)$. 

\end{remarks*}

From now on, we will refer to minimizers of the functional $\JJ$ as {\bf boosted ground states}. Correspondingly, the solutions $u(t,x) = e^{\ii t \omega} Q_{v, \omega}(x-\vB t)$ will be called \textbf{ground state traveling solitary waves.} It is easy to check that any such boosted ground state $Q_{s, v} \in H^{s}(\R^n)$ satisfies the profile equation \eqref{eq:Q} after a suitable rescaling $Q_{s,v} \mapsto \alpha Q_{s,v}$ with some constant $\alpha >0$.

\subsection{Cylindrical and Conjugation Symmetry for $n \geq 2$}
We now turn to our first main symmetry result, which establishes necessary symmetry properties of minimizers for the Weinstein-type functional $\JJ$ in space dimensions $n \geq 2$, under suitable assumptions on $P(D)$ and for integer $\sigma \in \N$. 

In order to prove a symmetry results for minimizers of $\JJ$, we will further develop the Fourier symmetrization method recently introduced in \cite{LeSo-18}. The main idea there is to use symmetric-decreasing rearrangement in Fourier space. In fact, this approach proves to be a useful substitute for standard rearrangement techniques in $x$-space, which are easily seen to fail for a large class of (e.\,g.~higher-order) operators (such as $P(D)=\Delta^2$) or operators with non-radially symmetric Fourier symbols such as $P_\vB(D)$ above. 

From \cite{LeSo-18} we recall the notion of Fourier rearrangement which is defined as
\be
u^\sharp := \mathcal{F}^{-1} \left \{ (\mathcal{F} u)^* \right \} \quad \mbox{for $u \in L^2(\R^n)$ with $n \geq 1$},
\ee
where $f^*$ denotes the symmetric-decreasing rearrangement of a measurable function $f: \R^n \to \C$ vanishing at infinity. For a non-zero velocities, the presence of the boost term $\ii \vB \cdot \nabla$ breaks radially symmetry in general. In this case, all rearrangement operations  that yield spherically symmetric functions (such as $\sharp$ defined above) cannot be applied to the minimization problem for $\JJ(f)$. However, under a suitable assumption on $P(D)$, we still expect to be able to show cylindrical symmetry of minimizers with respect to the direction given by the vector $\vB \neq 0$. Thus we introduce the following notion: We say that  $f : \R^n \to \C$ is {\bf cylindrically symmetric} with respect to a direction $\eB \in \mathbb{S}^{n-1}$ if we have
\be
f(R y) = f(y) \quad \mbox{for a.\,e.~$y \in \R^n$ and all $\Rs \in \mathrm{O}(n)$ with $\Rs \eB = \eB$}.
\ee
For such functions $f$, we will employ some abuse of notation by writing 
$$
f = f(y_{\parallel}, |y_\perp|),
$$
 where we decompose $y \in \R^n$ as $y=y_\parallel + y_\perp$ with $y_\perp$ perpendicular to $\eB \in \mathbb{S}^{n-1}$. For dimensions $n \geq 2$, we now introduce the following rearrangement operation defined as
\be
u^{\sharp_\eB} := \mathcal{F}^{-1} \left \{ (\mathcal{F} u)^{*_\eB} \right \} \quad \mbox{for $u \in L^2(\R^n)$ with $n \geq 2$},
\ee
where $f^{*_\eB} : \R^n \to \R_+$ denotes the \textbf{Steiner symmetrization in $n-1$ codimensions} with respect to a direction $\eB \in \mathbb{S}^{n-1}$, which is obtained by symmetric-decreasing rearrangements in  $n-1$-dimensional planes perpendicular to $\eB$; see Section \ref{sec:rearrange} below for a precise definition. It is elementary to check that $f^{\sharp_\eB}$ is cylindrically symmetric with respect to $\eB$. 

We now formulate the following assumption for $P(D)$.

\begin{assumption} \label{ass2}
The operator $P(D)$ has a multiplier function $p : \R^n \to \R$ which is cylindrically symmetric with respect to some direction $\eB \in \mathbb{S}^{n-1}$. Moreover, the map 
$$
|\xi_\perp| \mapsto p(\xi_\parallel, |\xi_\perp|)
$$ 
is strictly increasing.
\end{assumption}

We have the following general symmetry result. 

\begin{theorem}[Symmetry of Boosted Ground States for $n \geq 2$] \label{thm:sym}
Let $n \geq 2$ and suppose $P(D)$ satisfies Assumptions \ref{ass1} and \ref{ass2} with some $s \geq \frac 1 2$ and $\eB \in \mathbb{S}^{n-1}$. Furthermore, let $\vB  = |\vB| \eB \in \R^n$ and $\omega \in \R$ satisfy the hypotheses in Theorem \ref{thm:existence} and assume  $\sigma \in \N$ is an integer with $0 < \sigma < \sigma_*(n,s)$.

Then any boosted ground state $Q_{\omega, \vB} \in H^s(\R^n)$ is of the form
$$
Q_{\omega, \vB}(x) = e^{\ii \alpha} Q^{\sharp_\eB}(x+x_0)
$$
with some constants $\alpha \in \R$ and $x_0 \in \R^n$. As a consequence, any such $Q_{\omega, \vB}$ satisfies (up to a translation and phase) the symmetry properties {\em (P1)} and {\em (P2)} for almost every $x \in \R^n$.
\end{theorem}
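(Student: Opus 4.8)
The plan is to show that the Fourier--Steiner rearrangement $u \mapsto u^{\sharp_\eB}$ never increases the Weinstein functional $\JJ$; since $Q := Q_{\omega,\vB}$ is a minimizer this forces $Q^{\sharp_\eB}$ to be a minimizer as well and, more importantly, every inequality used along the way to be an equality, and a rigidity analysis of those equalities then identifies $Q$ up to translation and phase. Writing $\JJ(u) = \langle u,(P_\vB(D)+\omega)u\rangle^{\sigma+1}/\|u\|_{L^{2\sigma+2}}^{2\sigma+2}$ I would treat numerator and denominator separately. For the numerator, using $\vB = |\vB|\eB$ and $\xi = \xi_\parallel\eB + \xi_\perp$ one has
\[
\langle u,(P_\vB(D)+\omega)u\rangle = \int_{\R^n}\bigl(p(\xi_\parallel,|\xi_\perp|)+\omega\bigr)\,|\widehat u(\xi)|^2\,d\xi \;-\; |\vB|\int_{\R^n}\xi_\parallel\,|\widehat u(\xi)|^2\,d\xi .
\]
Since Steiner symmetrization rearranges each slice $\{\xi_\parallel = \mathrm{const}\}$ to be symmetric-decreasing in $\xi_\perp$, it preserves the $L^2$-mass of every slice; hence the second integral and the term $\omega\int_{\R^n}|\widehat u|^2\,d\xi$ are unchanged, while for the first integral the slice-wise rearrangement inequality applied to the weight $|\xi_\perp|\mapsto p(\xi_\parallel,|\xi_\perp|)$ — strictly increasing by Assumption~\ref{ass2} — yields a decrease. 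Thus $\langle u^{\sharp_\eB},(P_\vB(D)+\omega)u^{\sharp_\eB}\rangle \le \langle u,(P_\vB(D)+\omega)u\rangle$. For the denominator I invoke the Fourier rearrangement inequality for $L^{2\sigma+2}$-norms from \cite{LeSo-18}, valid precisely because $\sigma\in\N$ (it rests on the multilinear, Brascamp--Lieb--Luttinger-type structure of $\|u\|_{L^{2\sigma+2}}^{2\sigma+2}$ on the Fourier side), giving $\|u^{\sharp_\eB}\|_{L^{2\sigma+2}} \ge \|u\|_{L^{2\sigma+2}}$. Altogether $\JJ(u^{\sharp_\eB})\le\JJ(u)$; as $\langle Q,(P_\vB(D)+\omega)Q\rangle\simeq\|Q\|_{H^s(\R^n)}^2>0$, minimality forces both $\langle Q^{\sharp_\eB},(P_\vB(D)+\omega)Q^{\sharp_\eB}\rangle = \langle Q,(P_\vB(D)+\omega)Q\rangle$ and $\|Q^{\sharp_\eB}\|_{L^{2\sigma+2}} = \|Q\|_{L^{2\sigma+2}}$.

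From the first equality together with the strictness in Assumption~\ref{ass2}, the equality case of the slice-wise rearrangement inequality shows that $|\widehat Q(\xi_\parallel,\cdot)|$ equals its own symmetric-decreasing rearrangement for a.e.\ $\xi_\parallel$; that is, $|\widehat Q|$ is already Steiner symmetric, so $\widehat{Q^{\sharp_\eB}} = (\widehat Q)^{*_\eB} = |\widehat Q|\ge 0$. The second equality is exactly the equality case of the Hardy--Littlewood majorant inequality for the pair $f = Q$, $g = Q^{\sharp_\eB}$: indeed $\widehat g = |\widehat f|\ge 0$ is a nonnegative majorant of $\widehat f$ and $\|f\|_{L^{2\sigma+2}} = \|g\|_{L^{2\sigma+2}}$.

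The crux — and what I expect to be the main obstacle — is to verify the topological hypothesis under which the rigidity result of \cite{LeSo-18} for the majorant problem applies, namely a condition on $S := \{\xi\in\R^n : |\widehat Q(\xi)|>0\} = \{\widehat{Q^{\sharp_\eB}}>0\}$. I would exploit the profile equation: after the harmless rescaling making $\psi := Q^{\sharp_\eB}$ solve \eqref{eq:Q}, Fourier transformation gives $(p_\vB(\xi)+\omega)\,\widehat\psi(\xi) = \widehat{|\psi|^{2\sigma}\psi}(\xi)$. Since $\omega>-\Sigma_\vB$ the factor $p_\vB+\omega$ is continuous and strictly positive, and since $2\sigma+1 < \tfrac{n+2s}{n-2s}\le\tfrac{2n}{n-2s}$ in the subcritical range one has $\psi\in L^{2\sigma+1}(\R^n)$, so $|\psi|^{2\sigma}\psi\in L^1$, its Fourier transform is continuous, and therefore $\widehat\psi$ is continuous and $S$ open. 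Moreover $\widehat\psi\ge0$ is real, so $\widehat{\overline\psi}(\xi) = \widehat\psi(-\xi)\ge0$, and for integer $\sigma$ the right-hand side factors as $\widehat{|\psi|^{2\sigma}\psi} = \widehat\psi^{*(\sigma+1)}*\bigl(\widehat\psi(-\,\cdot)\bigr)^{*\sigma}$, a convolution of $2\sigma+1$ nonnegative continuous functions. Hence $S = \{\widehat{|\psi|^{2\sigma}\psi}>0\} \supseteq (\sigma+1)S + \sigma(-S)$, which (using $\sigma\ge1$ to cancel surplus summands) gives $S+S-S\subseteq S$. An open nonempty set with $S+S-S\subseteq S$ is invariant under translations by $S-S$, hence by the open — therefore closed — subgroup of $\R^n$ generated by $S-S$, which is all of $\R^n$; thus $S=\R^n$, the strongest possible form of the required topological property.

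With $S=\R^n$, the rigidity theorem of \cite{LeSo-18} applies and yields $\widehat Q(\xi) = e^{\ii(\alpha+x_0\cdot\xi)}\,\widehat{Q^{\sharp_\eB}}(\xi)$ for some $\alpha\in\R$ and $x_0\in\R^n$, i.e.\ $Q_{\omega,\vB}(x) = e^{\ii\alpha}\,Q^{\sharp_\eB}(x+x_0)$ as claimed. Finally, $Q^{\sharp_\eB}$ is cylindrically symmetric with respect to $\eB = \vB/|\vB|$, because Steiner symmetrization produces functions invariant under the rotations in $\mathrm{O}(n)$ fixing $\eB$ and the inverse Fourier transform commutes with such rotations; and since $\widehat{Q^{\sharp_\eB}} = |\widehat Q|$ is real-valued, $Q^{\sharp_\eB}(x) = \overline{Q^{\sharp_\eB}(-x)}$. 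Hence, up to a translation and a phase, $Q_{\omega,\vB}$ satisfies the cylindrical symmetry (S1) and the conjugation symmetry (S2). Beyond the topological step, the delicate ingredients are the $L^{2\sigma+2}$ Fourier rearrangement inequality together with its equality case (the two inputs from \cite{LeSo-18}) and the equality analysis for the kinetic term; the underlying regularity facts ($\psi\in H^s$, continuity of $\widehat\psi$, $L^1$-integrability of the nonlinearity) are standard in the energy-subcritical regime.
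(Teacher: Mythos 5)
Your proof is correct and, on the decisive topological step, takes a genuinely different and in fact sharper route than the paper. Up through the reduction (the slice-wise kinetic inequality, the Brascamp--Lieb--Luttinger $L^{2\sigma+2}$ inequality, and the resulting identity $|\widehat Q|=(\widehat Q)^{*_\eB}$) you follow the same path as the paper (Lemmas \ref{lem:SteinerFourier}, \ref{lem:LpSteiner}). The difference is what you do with the Fourier-side equation for $\psi=Q^{\sharp_\eB}$. You keep track of the fact that for $\sigma\in\N$ the nonlinearity factors as $\psi^{\sigma+1}\bar\psi^{\sigma}$, so that $\widehat{|\psi|^{2\sigma}\psi}$ is a convolution of $\sigma+1$ copies of $\widehat\psi\ge 0$ with $\sigma$ copies of $\widehat{\bar\psi}(\cdot)=\widehat\psi(-\,\cdot)\ge 0$, whence the support satisfies $S=(\sigma+1)S\oplus\sigma(-S)$ (equality by Lemma \ref{lem:convo}). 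Cancelling surplus summands gives $S+S-S\subseteq S$ and then the open-subgroup argument forces $S=\R^n$ outright, since any open subgroup of the connected group $\R^n$ is also closed and hence all of $\R^n$. The paper instead phrases the Minkowski relation as the pure $(2\sigma+1)$-fold sum $\Omega=\bigoplus_{k=1}^{2\sigma+1}\Omega$ (Lemma \ref{lem:topo} has the convolution $f*\cdots*f$, not $f*\cdots*f(-\,\cdot)*\cdots$), and then must rely on the cylindrical structure of $\Omega$ to project onto the $\eB$-axis, reduce to the one-dimensional classification $\pi_1(\Omega)\in\{\R_{>0},\R_{<0},\R\}$ (Lemma \ref{lem:Omega_1dim}), and conclude mere connectedness. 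Your observation that the conjugate factors contribute $-S$ neatly sidesteps this: it yields the stronger conclusion $S=\R^n$, makes no use of cylindrical symmetry at this step, works uniformly in all $n\ge1$, and in fact answers the question the authors explicitly say they could not resolve for the pure Minkowski sum $\Omega=\bigoplus_m\Omega$. (The paper's stated hypothesis for Lemma \ref{lem:topo} does not literally match the equation actually satisfied by $|\widehat Q|^{*_\eB}$ because of the reflected factors, though the final theorem is unaffected; your formulation is the cleaner one.) The remaining steps — continuity of $\widehat Q$ from $|Q|^{2\sigma}Q\in L^1$ in the subcritical range, the Hardy--Littlewood majorant rigidity of Lemma \ref{lem:HLM} with $S=\R^n$ trivially connected, and reading off cylindrical and conjugation symmetry from $\widehat{Q^{\sharp_\eB}}=|\widehat Q|\ge 0$ — match the paper's argument.
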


\begin{remark*}
Since the Fourier transform $\widehat{(Q_{\omega, \vB}^{\sharp_\eB})} = |\widehat{Q}_{\omega, \vB}|^{*_{\eB}} \geq 0$ is nonnegative,  we conclude that any boosted ground state $Q_{\omega, \vB}$ is a {\em positive-definite function} in the sense of Bochner, provided we also assume that $\widehat{Q}_{\omega, \vB} \in L^1(\R^n)$ (or more generally a finite Borel measure on $\R^n$). In many examples of interest, it is easy to check that indeed $\widehat{Q}_{\omega, \vB} \in L^1(\R^n)$ holds. Recall that a continuous function $f : \R^n \to \C$ is said to be positive-definite in the sense of Bochner if for any collections of points $x_1, \ldots, x_m \in \R^n$ we have
$$
\sum_{k,l = 1}^m f(x_k - x_l) \overline{z}_k z_l \geq 0 \quad \mbox{for all $\mathbf{z}=(z_1, \ldots, z_m) \in \C^m$},
$$
i.\,e., the complex matrix $[f(x_k-x_k)]_{1 \leq k,l \leq m}$ is positive semi-definite. As a direct consequence, we find that
$$
f(0) \geq |f(x)| \quad \mbox{for all $x \in \R^n$}.
$$
We refer to \cite{ReSi-78} for a discussion of positive-definite functions.
\end{remark*}

First, we briefly sketch the main line of argumentation for proving Theorem \ref{thm:sym}. Using the fact that $\sigma \in \N$ is an integer and by applying the {\em Brascamp--Lieb--Luttinger inequality} (a.k.a.~multilinear Riesz-Sobolev inequality) in Fourier space, we deduce that any boosted ground state $Q_{\omega, \vB} \in H^s(\R^n)$ satisfies
\be \label{ineq:good_J}
\JJ(Q_{\omega, \vB}^{\sharp_\eB}) \leq \JJ(Q_{\omega, \vB}) .
\ee
In particular, we see that $Q_{\omega, \vB}^{\sharp_{\eB}}$ is also a boosted ground state. More importantly, we find that equality  in \eqref{ineq:good_J} holds if and only if 
\be
|\widehat{Q}_{\omega, \vB}(\xi)| = |\widehat{Q}_{\omega, \vB}(\xi)|^{*_\eB}  \quad \mbox{for all $\xi \in \R^n$}.
\ee
This fixes the modulus of the Fourier transform $\widehat{Q}_{\omega, \vB}$, whereas its phase appears is yet completely undetermined. However, the conclusion of Theorem \ref{thm:sym} will follow once we show
\be
\widehat{Q}_{\omega, \vB}(\xi) = e^{\ii (\alpha + \beta \cdot \xi)} |\widehat{Q}_{\omega, \vB}(\xi)|^{*_\eB}
\ee
with some constants $\alpha \in \R$ and $\beta \in \R^n$. In fact, such a ``rigidity result'' about the phase function (i.\,e.~being just an affine function on $\R^n$) can be deduce from the recent result in \cite{LeSo-18} on the Hardy-Littlewood majorant problem in $\R^n$, provided we know that the open set
\be
\Omega = \{ \xi \in \R^n : |\widehat{Q}_{\omega, \vB}(\xi)| > 0 \} 
\ee 
is connected. Establishing this topological fact is the crux of this paper. We remark that in \cite{LeSo-18} where the symmetric-decreasing (Schwarz) symmetrization in $\R^n$ was used, we always have that $\Omega$ is either an open ball or all of $\R^n$; in particular, the set $\Omega$ is connected. However, for the Steiner symmetrization in $n-1$ codimensions needed to define $\sharp_\eB$ it is far from clear that the $\Omega$ is a connected set. Indeed, it is not hard to construct explicit examples of functions $f$ on $\R^n$ such that $|f| = |f|^{*_\eB}$ such that $\{ |f| > 0 \}$ is not connected.

To eventually  show that $\Omega$ above is in fact connected in our case, we will exploit the equation \eqref{eq:Qground} in Fourier space. As a consequence, we find that $\Omega$ must be equal to its \textbf{$m$-fold Minkowski sum} with the integer $m= 2 \sigma + 1$, i.\,e., we have
\be  \label{eq:Mink}
\Omega = \bigoplus_{k=1}^m \Omega : = \{ y_1 + \ldots + y_m : y_k \in \Omega, \; 1 \leq k \leq m \}.
\ee  
The key step is now to establish the connectedness of $\Omega \subset \R^n$ from this information. Surprisingly, we did not succeed in finding a general argument to conclude that any open (non-empty) set $\Omega \subset \R^n$ that satisfies \eqref{eq:Mink} is necessarily connected. However, by additionally using the cylindrical symmetry of $\Omega$, we are able to conclude that the sets $\Omega$ in question are indeed connected. See also the specific argument for the proof of Theorem \ref{thm:conj} below addressing the one-dimensional case $\Omega \subset \R$.

\subsection{Conjugation Symmetry for $n=1$}
In one space dimension, the concept of the symmetrization operation ${\sharp_\eB}$ becomes void. Still, we expect the conjugation symmetry (P2) to hold for boosted ground states in the one-dimensional case. To this end, we define the following operation
$$
f^\bullet = \mathcal{F}^{-1} \left \{ |\mathcal{F} f| \right \} \quad \mbox{for $f \in L^2(\R^n)$}.
$$
We may still ask whether the boosted ground states $Q_{\omega, \vB} \in H^s(\R)$ as given by Theorem \ref{thm:existence} always obey that 
$$ 
Q_{\omega, \vB} = e^{\ii \alpha} Q_{\omega, \vB}^\bullet(x+x_0) \quad \mbox{for almost every $x \in \R^n$},
$$
with some constants $\alpha \in \R$ and $x_0 \in \R^n$. As already mentioned for the proof of Theorem \ref{thm:sym} above, the key ingredient needed to be shown is that $\{ |\widehat{Q}_{\omega, \vB}| > 0 \}$ is a connected set. Luckily,  by exploiting the one-dimensionality of the problem, we can show that must have $\Omega \in \{ \R_{>0}, \R_{<0}, \R \}$, whence it follows that $\Omega$ is connected.

\begin{theorem}[Conjugation Symmetry for $n=1$] \label{thm:conj}
 Let $n = 1$ and suppose  the hypotheses of Theorem \ref{thm:existence} are satisfied.  Moreover, we assume $\sigma \in \N$ is an integer. Then any boosted ground state $Q_{\omega, \vB} \in H^{\frac 1 2}(\R)$ is of the form
 $$
Q_{\omega, \vB}(x) = e^{\ii \alpha} Q_{\omega, \vB}^\bullet(x+x_0) \quad \mbox{for a.\,e.~$x \in \R$},
$$
with some constants $\alpha \in \R$ and $x_0 \in \R$. In particular, any such $Q_{\omega, \vB} \in H^{\frac 1 2}(\R)$ satisfies (up to translation and phase) the conjugation symmetry (P2) for a.\,e.~$x \in \R$.
\end{theorem}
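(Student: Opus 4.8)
The plan is to follow the same two-stage strategy outlined for Theorem~\ref{thm:sym}, but now in the one-dimensional setting where the symmetrization $\sharp_\eB$ degenerates and only the phase-rigidity mechanism remains. First I would fix a boosted ground state $Q := Q_{\omega,\vB} \in H^{1/2}(\R)$ and write $\widehat{Q} = |\widehat{Q}|\, e^{\ii \varphi}$ with $\varphi$ a measurable phase. Since $\sigma \in \N$, the nonlinear term $|Q|^{2\sigma}Q = Q^{\sigma+1}\overline{Q}^{\sigma}$ becomes, on the Fourier side, an $m$-fold convolution with $m = 2\sigma+1$ of the functions $\widehat{Q}$ (taken $\sigma+1$ times) and $\overline{\widehat{Q}(-\cdot)}$ (taken $\sigma$ times). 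Reading off the profile equation \eqref{eq:Q} in Fourier space, at any $\xi$ with $p_\vB(\xi)+\omega > 0$ (which holds for all $\xi$ once $\omega > -\Sigma_\vB$, up to the borderline) we get
$$
(p_\vB(\xi)+\omega)\,\widehat{Q}(\xi) = \big(\widehat{Q} * \cdots * \widehat{Q} * \overline{\widehat{Q}(-\cdot)} * \cdots * \overline{\widehat{Q}(-\cdot)}\big)(\xi),
$$
so $\widehat{Q}(\xi) \neq 0$ forces the right-hand convolution to be nonzero at $\xi$, and conversely. This yields the Minkowski-sum identity $\Omega = \bigoplus_{k=1}^{m}\Omega$ for the open set $\Omega = \{\xi \in \R : |\widehat{Q}(\xi)| > 0\}$, where here the $\sigma$ reflected copies contribute $-\Omega$; but since we also know from the Brascamp--Lieb--Luttinger step (applied trivially in $n=1$, where $*_\eB$ is the identity) that equality holds so $|\widehat{Q}|$ is unconstrained by symmetrization — the real content is purely the support identity.

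The heart of the $n=1$ case is the claim that any nonempty open set $\Omega \subseteq \R$ satisfying $\Omega = \bigoplus_{k=1}^m \Omega$ (allowing reflections, i.e.\ $\Omega = (\sigma+1)\Omega \oplus (-\sigma)\Omega$ in the appropriate sense) must be one of $\R_{>0}$, $\R_{<0}$, or $\R$. I would argue as follows. Write $\Omega = \bigsqcup_j I_j$ as a disjoint union of its open connected components (open intervals). The Minkowski sum of two intervals is an interval, and more generally the set $\bigoplus_{k=1}^m \Omega$ has at most one component "per choice of summand components," but crucially: if $\Omega$ had two components $I_1, I_2$ with a gap between them, then the Minkowski self-sum would fill in, producing a connected piece strictly larger than either, contradicting $\Omega = \bigoplus \Omega$ unless the components are arranged so that no gaps survive — in $\R$ this forces a single component. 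More carefully, note $0 \in \Omega - \Omega$ is automatic, and one can show $\Omega$ must be an interval by observing that $\sup \Omega$ and $\inf \Omega$ satisfy $\sup \Omega = (\sigma+1)\sup\Omega - \sigma \inf \Omega$ and similarly for $\inf$, which (taking the possibilities $\sup\Omega \in \{+\infty\}$ or finite, and likewise $\inf$) forces $\inf \Omega \in \{-\infty, 0\}$ and $\sup \Omega \in \{0, +\infty\}$; combined with openness and the fact that an interval with endpoint $0$ cannot contain $0$, this leaves exactly $\R_{>0}$, $\R_{<0}$, $\R$ — and one must still rule out $\Omega$ being a proper open sub-interval with these endpoints, which follows because $\bigoplus_{k=1}^m (0,b) = (0, mb) \supsetneq (0,b)$ when $b$ is finite. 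The case analysis on whether the reflected copies are present (i.e.\ whether $\sigma \geq 1$, which it is) is what produces the sign constraint pinning the endpoints to $0$ or $\pm\infty$.

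Once $\Omega$ is known to be connected, I would invoke the rigidity result from \cite{LeSo-18} on the Hardy--Littlewood majorant problem exactly as in the sketch for Theorem~\ref{thm:sym}: connectedness of $\Omega$ together with the fact that $|\widehat{Q}|$ and its own $m$-fold convolution structure are "extremal" forces the phase $\varphi$ to be affine, $\varphi(\xi) = \alpha + \beta\xi$ for constants $\alpha \in \R$, $\beta \in \R$. Translating back, $\widehat{Q}(\xi) = e^{\ii(\alpha + \beta\xi)}|\widehat{Q}(\xi)|$, and since $|\widehat{Q}|$ need not itself be even, what we actually obtain after this step — combined with the reflection symmetry forced by the structure $\Omega = (\sigma+1)\Omega \oplus (-\sigma)\Omega$, which when $\Omega$ is, say, $\R_{>0}$ is consistent, and when it is $\R$ imposes $|\widehat{Q}(\xi)| = |\widehat{Q}(-\xi)|$ — is precisely that $Q^\bullet(x+x_0) = e^{-\ii\alpha}Q(x)$ after absorbing $\beta$ into the translation $x_0 = \beta$. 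I expect the main obstacle to be the purely set-theoretic lemma about $\Omega = \bigoplus_{k=1}^m \Omega$ in the presence of reflected summands: getting the endpoint/sign bookkeeping right so that one genuinely lands in $\{\R_{>0}, \R_{<0}, \R\}$ and does not miss a degenerate configuration, and verifying that the reflection structure is compatible with applying the \cite{LeSo-18} rigidity theorem, whose hypotheses I would need to check carefully match the one-dimensional convolution identity at hand.
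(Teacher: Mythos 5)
Your overall strategy matches the paper's: pass to $Q^\bullet$, read the Euler--Lagrange equation in Fourier space, derive a Minkowski-sum identity for $\Omega = \{|\widehat{Q}| > 0\}$, prove $\Omega$ is connected, and then invoke the Hardy--Littlewood majorant rigidity result to fix the phase to be affine. That part is essentially identical. Two points deserve comment.

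First, you are right to keep track of the reflected copies: writing $|Q|^{2\sigma}Q = Q^{\sigma+1}\overline{Q}^{\sigma}$ on the Fourier side gives $\sigma+1$ factors of $\widehat{Q^\bullet}$ and $\sigma$ factors of $\widehat{\overline{Q^\bullet}}(\xi) = \widehat{Q^\bullet}(-\xi)$, so the honest identity is $\Omega = \bigoplus^{\sigma+1}\Omega \oplus \bigoplus^{\sigma}(-\Omega)$, not the unreflected $\bigoplus^{2\sigma+1}\Omega$ that the paper writes down and feeds into its Lemma on Minkowski self-sums. (Interestingly, your reflected version is in fact \emph{more} restrictive: once $\sigma \ge 1$, the presence of $-\Omega$ in the sum rules out $\R_{>0}$ and $\R_{<0}$ and leaves only $\Omega = \R$. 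Either way, connectedness holds, which is all that is needed downstream.) Also, your closing worry about needing $|\widehat{Q}(\xi)| = |\widehat{Q}(-\xi)|$ is a red herring: once the HLM step gives $\widehat{Q}(\xi) = e^{\ii(\alpha+\beta\xi)}|\widehat{Q}(\xi)|$, the relation $Q(-x) = \overline{Q(x)}$ (after absorbing the phase and translation) follows directly from $\widehat{Q^\bullet}$ being real and nonnegative, without any evenness of $|\widehat{Q}|$.

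Second, and more seriously, the set-theoretic lemma is where the real work lies, and your sketch of it has a genuine gap. The endpoint arithmetic $\sup\Omega = (\sigma+1)\sup\Omega - \sigma\inf\Omega$ together with the analogous identity for $\inf\Omega$ yields, upon subtracting, $(2\sigma)(\sup\Omega - \inf\Omega) = 0$, which only tells you that $\Omega$ is unbounded (or degenerate); it says nothing about connectedness. The informal claim that ``the Minkowski self-sum would fill in'' a gap between two components is not a proof --- for instance $\Omega = (0,1)\cup(2,\infty)$ has $\Omega\oplus\Omega = (0,2)\cup(2,\infty)$, so the gap moves rather than disappearing, and one must argue why $\Omega\oplus\cdots\oplus\Omega = \Omega$ is incompatible with this, not merely assert it. The paper proves the corresponding statement (its Lemma~\ref{lem:Omega_1dim}) via a two-step argument: first, when $\Omega \subset \R_{\geq 0}$, show $\inf\Omega = 0$ by an $\eps$-argument on the $m$-fold sum, then rule out a gap $(x,b)\subset\Omega$, $b\notin\Omega$ by choosing $c\in\Omega$ small enough that $d + (m-1)c = b$ for some $d \in (x,b)$; second, when $\Omega$ meets both half-lines, construct nested intervals escaping to $\pm\infty$ inside $\Omega$, deduce $0\in\Omega$, and iterate to get $\Omega = \R$. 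You would need to supply an argument of this kind (adapted to the reflected Minkowski identity if you wish to keep that refinement) to make the lemma rigorous.
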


\begin{remarks*}
1) As in Theorem \ref{thm:sym} above, we actually obtain that $Q_{\omega, \vB}$ has non-negative Fourier transform. In particular, if $\widehat{Q}_{\omega, \vB} \in L^1(\R)$, we see that $Q_{\omega, \vB}$ (up to translation and phase) is a positive-definite function in the sense of Bochner.

2) For a conjugation symmetry result in general dimensions $n \geq 1$, we refer to our companion paper \cite{BuLeScSo-19}, where an analyticity condition on the Fourier symbol $p(\xi)$ is imposed in order to be able to deal with $n\geq 2$. 
\end{remarks*}

\subsection{Examples}

We list some essential examples, where we can deduce symmetries of boosted ground states for the following equation of the form \eqref{eq:gNLS}.

\begin{itemize}
\item \textbf{Fourth-order/biharmonic NLS} of the form
$$
\ii \pt_t u = \Delta^2 u + \mu \Delta u - |u|^{2 \sigma} u, \quad (t,x) \in \R \times \R^n,
$$
where $\mu \in \R$ and integer $\sigma \in \N$ with $1 \leq \sigma < \infty$ if $1 \leq n \leq 4$ and $1 \leq \sigma < \frac{4}{n-4}$ if $n \geq 5$.
\item \textbf{Fractional NLS} of the form
$$
\ii \pt_t u = (-\Delta)^s \, u - |u|^{2 \sigma} u, \quad (t,x) \in \R \times \R^n,
$$
with $s > 0$ and integers $\sigma \in \N$ such that $1 \leq \sigma < \sigma_*(s,n)$. 
\item \textbf{Half-Wave and Square-Root Klein-Gordon equations} of the form
$$
\ii \pt_t u = \sqrt{-\Delta+m^2} \, u - |u|^{2 \sigma} u, \quad (t,x) \in \R \times \R,
$$
with $m \geq 0$ and arbitrary integer $\sigma \in \N$.
 \end{itemize}

Finally, we also remark that the Fourier symmetrization techniques in this paper seem to be ready-made to be generalized to anisotropic NLS type equations, where the order of derivatives may depend on the spatial direction. For instance, we could study symmetries of boosted ground states for the focusing \textbf{half-wave-Schr\"odinger type equations} of the form
$$
\ii \pt_t u = \Delta_{x} u - \gamma \sqrt{-\Delta_y} u - |u|^{2 \sigma} u, \quad (t,x,y) \in \R \times \R_x^{k} \times \R_y^{l}
$$
with parameter $\gamma > 0$ and suitable integers $\sigma \in \N$. However, the relevant Sobolev space now becomes of the form 
$$
X = \{ u \in L^2(\R^{k+l}) : \int_{\R^{k+l}} (|\xi|^{2} + |\eta|) |\widehat{u}(\xi, \eta)|^2 \, d \xi \, d \eta < \infty \},
$$
where $\widehat{u}(\xi, \eta)$ with $(\xi, \eta) \in \R^k \times \R^l$ denotes the Fourier transform of $u$ in $\R^{n} = \R^{k} \times \R^l$.

\subsection*{Acknowledgments} The authors gratefully acknowledge financial support by the Swiss National Science Foundation (SNF) under grant no.~200021-149233. We also thank Tobias Weth for helpful comments on this work.

\section{Existence of Traveling Solitary Waves}

This section is devoted to the proof of Theorem \ref{thm:existence} by following the arguments in \cite{Hi-17}. Instead of concentration-compactness methods, we shall follow a different approach by adapting the techniques in \cite{BeFrVi-14} based on a general compactness lemma in $\dot{H}^s$ for general $s > 0$ (originally due to E.~Lieb for the case $s=1$).

\subsection{Proof of Theorem \ref{thm:existence}} We follow \cite{Hi-17} adapted to our setting here. Suppose that $P(D)$ satisfies Assumption \ref{ass1} with constants $s \geq \frac 1 2$, $A, B > 0$. Let $\vB \in \R^n$ with be given, where we additionally assume $|\vB| < A$ if $s=\frac 1 2$. Finally, we impose that $\omega > -\Sigma_\vB$ with $\Sigma_\vB$ defined in \eqref{def:Sigma_v}. Recalling that $P_\vB(D) = P(D) + \ii \vB \cdot \nabla$, we can define the norm 
$$
 \norm{u}_{\omega, \vB} := \left \langle u, (P_\vB(D)+\omega) u \right \rangle^{1/2} = \left ( \int_{\R^n} (p(\xi)- \vB \cdot \xi + \omega) |\widehat{u}(\xi)|^2 \, d\xi \right)^{1/2} .
$$
It is elementary to see that we have the norm equivalence
$$
\norm{u}_{\omega, \vB} \sim_{A,B,\vB,\omega} \norm{u}_{H^s}.
$$
Note that the functional $\JJ$ can be written as
$$
\JJ(u) = \frac{ \norm{u}_{\omega, \vB}^{2\sigma+2} }{\| u \|_{L^{2 \sigma +2}}^{2 \sigma +2} }.
$$
In what follows, we shall use $X \lesssim Y$ to mean that $X \leq C Y$ with some constant $C>0$ that only depends on $s,n,A,B,\sigma, \omega$. We set
$$
\JJJ := \inf\left  \{ \JJ(u) \mid  u \in H^s(\R^n), \; u \not \equiv 0 \right \}
$$
Since $0 < \sigma < \sigma_*(n,s)$, we obtain the Sobolev-type inequality
$$
\| u \|_{L^{2 \sigma+2}} \lesssim \norm{u}_{H^s} \lesssim \norm{u}_{\omega, \vB},
$$
which shows that $\JJJ > 0$ is strictly positive.

Suppose that $(u_j) \subset H^s(\R^n) \setminus \{ 0 \}$ is a minimizing sequence, i.\,e., we have $\JJ(u_j) \to \JJJ$ as $j \to \infty$. By scaling properties, we can assume without loss of generality that $\norm{u_j}_{L^{2 \sigma+2}} = 1$ for all $j \in \N$. Obviously, we find that $\sup_j \norm{u_j}_{\omega, \vB} \lesssim 1$. Hence the sequence $(u_j)$ is bounded in $H^s(\R^n)$. 

Next, we show that $(u_j)$ has  a non-zero weak limit in $H^s(\R^n)$, up to spatial translations and passing to a subsequence. To prove this claim, let us first assume that $s \neq n/2$ holds and therefore we have the continuous embedding $H^s(\R^n) \subset L^{2 \sigma_*+2}(\R^n)$. Now we choose a number $r \in (2 \sigma+2, 2 \sigma_* + 2)$.  By H\"older's and Sobolev's inequality, we have
\be \label{ineq:uj_one}
\norm{u_j}_{L^r} \leq \norm{u_j}_{L^{2 \sigma+2}}^{\theta} \norm{u_j}_{L^{2 \sigma_* + 2}}^{1-\theta} \lesssim \norm{u_j}_{L^{2 \sigma+2}}^{\theta} \norm{u_j}_{H^s}^{1-\theta},
\ee
with $\frac{\theta}{2 \sigma+2} + \frac{1-\theta}{2 \sigma_*+2} = \frac 1 r$. Since $\norm{u_j}_{L^{2 \sigma+2}} = 1$ for all $j$ and $\norm{u_j}_{L^2} \leq \norm{u_j}_{H^s} \lesssim 1$, we deduce from \eqref{ineq:uj_one} that there exist constants $\alpha, \beta, \gamma > 0$ such that
$$
\norm{u_j}_{L^2} \leq \alpha, \quad \norm{u_j}_{L^{2 \sigma+2}} \geq \beta, \quad \norm{u_j}_{L^r} \leq \gamma
$$
holds for all $j \in \N$. In the borderline case $s=n/2$, we also deduce the existence of such constants $\alpha, \beta, \gamma >0$, where we just have to replace $2 \sigma_*+2$ above by any number $q \in (2 \sigma+2, \infty)$ and use that $H^s(\R^n) \subset L^q(\R^n)$ holds. We omit the details.

Next, by invoking the Lemma \ref{lem:pqr}, we deduce that 
$$
\inf_{j \in \N} \left | \{ x \in \R^n \mid |u_j(x)| > \eta \} \right |  \geq c
$$
with some strictly positive constants $\eta, c> 0$, where $|\cdot|$ denotes the $n$-dimensional Lebesgue measure. Thus we can apply Lemma \ref{lem:lieb-compact} to conclude (after passing to a subsequence if necessary) that there exists a sequence of translations $(x_j)$ in $\R^n$ and some non-zero function $u \in H^s(\R^n) \setminus \{ 0 \}$ such that
\be
\mbox{$u_j(\cdot + x_j) \weakto u$ in $H^s(\R^n)$}.
\ee

Next, we show that the weak limit $u \not \equiv 0$ is indeed an optimizer for $\JJ$ and that $u_j \to u$ strongly in $H^s(\R^n)$. By the translational invariance of $\JJ$, we can assume that $x_j = 0$ for all $j$. Moreover, since the sequence $(u_j)$ is bounded in $H^s(\R^n)$, we can also assume pointwise convergence $u_j(x) \to u(x)$ almost everywhere. Recalling that $\norm{u_j}_{L^{2 \sigma+2}}=1$ for all $j$, the Br\'ezis-Lieb refinement of Fatou's lemma yields that
$$
\norm{u_j-u}_{L^{2 \sigma+2}}^{2 \sigma+2} + \norm{u}_{L^{2 \sigma+2}}^{2 \sigma+2} = 1 + o(1).
$$
Furthermore, from $\JJ(u_j)  \to \JJJ$ together with $\norm{u_j}_{L^{2 \sigma+2}}=1$ for all $j$ we conclude that
$$
\norm{u_j}_{\omega, \vB}^2 \to (\JJJ)^{\frac{1}{\sigma+1}}.
$$
On the other hand, since $u_j \weakto u$ in $H^s(\R^n)$ and writing $H = P_v(D) + \omega$ so that $\langle f, H f \rangle = \norm{f}_{\omega, \vB}^2$ for all $f \in H^s(\R^n)$, we readily find that 
$$
\langle u_j - u, H(u_j -u) \rangle + \langle u, H u \rangle = (\JJJ)^{\frac{1}{\sigma+1}} + o(1)
$$
by using elementary properties of the $L^2$-inner product. In summary, we thus deduce
\begin{align*}
& \JJJ \left \{ \norm{u_j-u}_{L^{2 \sigma+2}}^{2 \sigma+2} + \norm{u}_{L^{2 \sigma+2}}^{2 \sigma+2} + o(1) \right \}  = \JJJ \\
& = \left \{  \langle u_j - u, H(u_j -u) \rangle + \langle u, H u \rangle \right \}^{\sigma+1} \\
& \geq \langle u_j -u, H(u_j-u) \rangle^{\sigma+1} + \langle u, H u \rangle^{\sigma+1} + o(1) \\
& \geq \JJJ \norm{u_j-u}_{L^{2\sigma+2}}^{2 \sigma+2} + \langle u, H u \rangle^{\sigma+1} + o(1). 
\end{align*}
In the first inequality above, we used the elementary inequality $(x+y)^q \geq x^q + y^q$ for $x,y \geq 0$ and $q \geq 1$. Passing to the limit $j \to \infty$ and using that $u \not \equiv 0$, we obtain
$$
\JJJ \geq \frac{\langle u, H u \rangle^{\sigma+1}}{\norm{u}_{L^{2 \sigma+2}}^{2 \sigma+2}} = \JJ(u),
$$
which shows that $u \in H^s(\R^n) \setminus \{0 \}$ must be a minimizer. Also, we remark that we must have $\langle u_j - u, H(u_j-u) \rangle = \norm{u_j-u}_{\omega, \vB}^2 \to 0$ as $j \to \infty$, since equality must hold everywhere. This shows that in fact $u_j \to u$ strongly in $H^s(\R^n)$ due to the equivalence of norms $\norm{\cdot}_{H^s} \sim \norm{\cdot}_{\omega, \vB}$.

Finally, we note that an elementary calculation shows that any minimizer $Q_{\omega, \vB} \in H^s(\R^n) \setminus \{0 \}$ for $\JJ$ with $\norm{Q_{v, \omega}}_{L^{2 \sigma+2}}=1$ satisfies the corresponding Euler-Lagrange equation
\be
P_\vB(D) Q_{\omega, \vB} + \omega Q_{v,\omega} - (\JJJ)^{\frac{1}{\sigma+1}} |Q_{v,\omega}|^{2 \sigma} Q_{v, \omega} = 0.
\ee
After a rescaling $Q_{\omega, \vB} \mapsto \alpha Q_{\omega, \vB}$ with a suitable constant $\alpha > 0$, we find that $Q_{\omega, \vB}$ solves \eqref{eq:Qground}. This completes the proof of Theorem \ref{thm:existence}. \hfill $\qed$

\section{Rearrangements in Fourier Space}

In this section, we recall and introduce some notions needed to prove Theorems \ref{thm:sym} and \ref{thm:halfwave}.

\label{sec:rearrange}

\subsection{Preliminaries} We start by recalling some standard definitions in rearrangement techniques. Let $\mu_k$ denote the Lebesgue measure in dimension $k \geq 1$. For a Borel set $A \subset \R^k$, we denote by $A^*$ its symmetric rearrangement defined as the open ball $B_R(0)$ centered at the origin whose Lebesgue measure equals that of $A$, i.\,e., we set
$$
A^* = \{ x \in \R^k : |x |  < R \} \quad \mbox{such that $V_k R^k = \mu_k(A)$},
$$
where $V_k=\mu_k(B_1(0))$ is the volume of the unit ball in $\R^k$. Next, let $u: \R^k \to \C$ be measurable function that vanishes at infinity, which means that $\mu_k( \{ x \in \R^k : |u(x)| > t \} )$ is finite for all $t > 0$. We recall that the {\bf symmetric-decreasing rearrangement} of $u$ is defined as the nonnegative function $u : \R^k \to \R_+$ by setting
$$
u^*(x) = \int_0^\infty \chi_{\{ |u| > t \}^*}(x) \, \diff t,
$$
where $\chi_B$ denotes characteristic function of a the set $B \subset \R^k$. 

Let us now take $n \geq 2$ dimensions and decompose $\R^n= \R \times \R^{n-1}$. Accordingly, we write elements $x \in \R^n$ often as  $x = (x_1, x') \in \R \times \R^{n-1}$. For a measurable (Borel) function $u : \R^n \to \C$ vanishing at infinity, we define its {\bf Steiner symmetrization in $n-1$ codimensions}\footnote{We follow the nomenclature in \cite{Ca-14}.}. as the function $u^{*_1} : \R \times \R^{n-1} \to \R_+$ given by
$$
u^{*_1}(x_1, x') := u(x_1, \cdot)^*(x'),
$$ 
where $*$ on the right side denotes the symmetric-decreasing rearrangement of the function $x' \mapsto u(x_1, x')$ in $\R^{n-1}$ for each $x_1 \in \R$ fixed. Of course, the rearrangement operator $*^1$ can be easily generalized to arbitrary coordinate directions. More precisely, given a unit vector $\eB \in \mathbb{S}^{n-1}$, we pick a matrix $\Rs \in \mathrm{O}(n)$ such that $R \eB= \eB_1=(1, 0, \ldots, 0)$ and let $(\Rs u)(x) := f(\Rs^{-1} x)$ denote the action of $\Rs$ on functions $u: \R^n \to \C$. We can then define the Steiner symmetrization in $n-1$-dimensions with respect to $\eB$ as the nonnegative function $u^{*_e} : \R^n \to \R_+$ that is given by
$$
u^{*_\eB}:= \Rs^{-1}  ( (\Rs  u)^{*_1}) .
$$ 

Recalling the definition in \cite{LeSo-18}, we define the \textbf{Fourier rearrangement} of a function $u \in L^2(\R^n)$ to be given by
\be
u^\sharp := \mathcal{F}^{-1}  \left \{ ( \mathcal{F}(u) )^* \right \},
\ee
where $*$ denotes the symmetric-decreasing rearrangement in $\R^n$ and $\mathcal{F}$ is the Fourier transform 
\be
\mathcal{F}u(\xi) \equiv \widehat{u}(\xi) := \frac{1}{(2 \pi)^{n/2}} \int_{\R^n} u(x) e^{-i \xi \cdot x} \, \diff x,
\ee
defined for $u \in L^1(\R^n)$ and extended to $u \in L^2(\R^n)$ by density.
Finally, we come to the main technical tool used in this paper. Given a direction $\eB \in \mathbb{S}^{n-1}$ and $u \in L^2(\R^n)$, we define its \textbf{Fourier Steiner rearrangement in $n-1$ codimensions} by setting
\be
u^{\sharp_\eB} := \mathcal{F}^{-1} \left \{ \mathcal{F}(u)^{*_\eB} \right \}.
\ee
By a suitable rotation of coordinates in $\R^n$, it will often suffice to consider the case $\eB= \eB_1=(1,0,\ldots, 0)$ and likewise we simply write
\be
u^{\sharp_1} := \mathcal{F}^{-1} \left \{ ( \mathcal{F}(u)^{*_1} \right \}.
\ee

Next, we collect some basic properties of the operation $\sharp_\eB$ as follows.

\begin{lemma}\label{lem:basic_properties}
Let $n \geq 2$, $\eB \in \mathbb{S}^{n-1}$, and $u \in L^2(\R^n)$. Then the following properties hold.
\begin{enumerate}
\item[(i)] $\| u^{\sharp_\eB} \|_{L^2} = \| u \|_{L^2}$.
\item[(ii)] $u^{\sharp_\eB}$ is cylindrically symmetric with respect to $\eB$, i.\,e., for every matrix $\Rs \in \mathrm{O}(n)$ with $\Rs \eB= \eB$ it holds that
$$
u^{\sharp_\eB}(x) = u^{\sharp_\eB}(\Rs x) \quad \mbox{for a.\,e.~$x \in \R^n$}.
$$
\item[(iii)] If in addition $\widehat{u} \in L^1(\R^n)$, then $u^{\sharp_\eB}$ is a continuous and \textbf{positive definite} function in the sense of Bochner, i.\,e., we have
$$
\sum_{k,l=1}^m u^{\sharp_\eB}(x_k-x_l) \overline{z}_k z_l \geq 0
$$
for all integers $m \geq 1$ and $x_1, \ldots, x_m \in \R^n$ and $z \in \C^N$. In particular, it holds that
$$
u^{\sharp_\eB}(0) \geq |u^{\sharp_\eB}(x)| \quad \mbox{for all $x \in \R^n$}.
$$
\end{enumerate}
\end{lemma}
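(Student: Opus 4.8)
The plan is to verify the three properties in turn; each reduces to standard facts about the symmetric-decreasing rearrangement together with Plancherel's theorem and the behaviour of the Fourier transform under orthogonal maps. The observation underlying (i) and (iii) is that the Steiner symmetrization $g \mapsto g^{*_\eB}$ preserves every $L^p$-norm on $\R^n$: after a rotation we may take $\eB = \eB_1$ (rotations being unitary on all $L^p$), and by Fubini it then suffices that for each fixed $x_1$ the function $x' \mapsto g(x_1, \cdot)^*(x')$ has the same distribution function on $\R^{n-1}$ as $g(x_1, \cdot)$. In particular $g^{*_\eB}$ is well defined (two auxiliary rotations carrying $\eB$ to $\eB_1$ differ by an element of $\mathrm{O}(n-1)$ acting on $x'$, under which the slice-wise rearrangement is invariant). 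Property (i) then follows by combining this with Plancherel: $\| u^{\sharp_\eB} \|_{L^2} = \| (\mathcal{F}u)^{*_\eB} \|_{L^2} = \| \mathcal{F}u \|_{L^2} = \| u \|_{L^2}$.

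For (ii), I would first note that $(\mathcal{F}u)^{*_\eB}$ is cylindrically symmetric about $\eB$, since on each hyperplane orthogonal to $\eB$ it is, by construction, a radial (symmetric-decreasing) function of the perpendicular variable; hence $(\mathcal{F}u)^{*_\eB}(\Rs\xi) = (\mathcal{F}u)^{*_\eB}(\xi)$ for every $\Rs \in \mathrm{O}(n)$ with $\Rs\eB = \eB$. It then remains to use that the Fourier transform intertwines the $\mathrm{O}(n)$-action, i.e.\ $\mathcal{F}^{-1}(g \circ \Rs) = (\mathcal{F}^{-1}g)\circ \Rs$ for $\Rs \in \mathrm{O}(n)$ — an immediate change of variables using $|\det \Rs| = 1$ and $\Rs^{-1} = \Rs^{T}$, extended to $L^2$ by density. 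Applied to $g = (\mathcal{F}u)^{*_\eB}$ this yields $u^{\sharp_\eB}(\Rs x) = u^{\sharp_\eB}(x)$ for a.e.\ $x$.

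For (iii), assume in addition $\widehat{u} \in L^1(\R^n)$. The $L^1$-invariance of Steiner symmetrization (the same Fubini argument) gives that $\widehat{u^{\sharp_\eB}} = (\mathcal{F}u)^{*_\eB}$ is a nonnegative function in $L^1(\R^n)$, so $u^{\sharp_\eB}$ is the inverse Fourier transform of an integrable function and hence continuous and bounded. Positive-definiteness is the easy half of Bochner's theorem: for any $m$, any $x_1, \dots, x_m \in \R^n$ and $z \in \C^m$ one expands
$$
\sum_{k,l=1}^m u^{\sharp_\eB}(x_k - x_l)\,\overline{z}_k z_l = \frac{1}{(2\pi)^{n/2}} \int_{\R^n} (\mathcal{F}u)^{*_\eB}(\xi) \,\Big| \sum_{k=1}^m z_k\, e^{-\ii x_k \cdot \xi} \Big|^2 \diff \xi \ \geq\ 0 ,
$$
using $(\mathcal{F}u)^{*_\eB} \geq 0$. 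Finally $u^{\sharp_\eB}(0) \geq |u^{\sharp_\eB}(x)|$ follows either by specializing this to $m = 2$, $x_1 = 0$, $x_2 = x$ (noting $u^{\sharp_\eB}(-x) = \overline{u^{\sharp_\eB}(x)}$, which holds since $\widehat{u^{\sharp_\eB}}$ is real), or directly from $|u^{\sharp_\eB}(x)| \le (2\pi)^{-n/2}\int_{\R^n} (\mathcal{F}u)^{*_\eB} = u^{\sharp_\eB}(0)$.

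None of the steps presents a genuine difficulty; the only points that demand a little care are the independence of $g^{*_\eB}$ from the rotation used to align $\eB$ with $\eB_1$, and the fact that the identity in (ii) holds only almost everywhere unless $\widehat{u} \in L^1(\R^n)$ — in which case, as established in (iii), $u^{\sharp_\eB}$ has a canonical continuous representative and the identity holds for every $x$.
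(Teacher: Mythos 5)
Your proof is correct and takes the same route as the paper: Fubini plus slicewise equimeasurability and Plancherel for (i), $\mathrm{O}(n)$-equivariance of the Fourier transform for (ii), and $\widehat{u^{\sharp_\eB}} \geq 0$ together with the easy direction of Bochner's theorem for (iii). The paper is terser (it dispatches (ii) and (iii) with a one-line reference to ``elementary properties'' and ``classical arguments''), whereas you spell out the intertwining computation, the well-definedness of $*_\eB$ under choice of aligning rotation, and the explicit positive-definiteness integral — all of which are exactly the details the paper leaves implicit.
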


\begin{remark*}
Note that item (iv) says  in particular that $u^{\sharp_\eB}(0)$ is a real number. However, the values $u^{\sharp_\eB}(x)$ can be complex numbers for $x \neq 0$ in general.
\end{remark*}

\begin{proof}
Without loss of generality we can assume that $\eB = \eB_1=(1,0, \ldots,0)$. 

Item (i) follows from elementary arguments. Indeed, by Fubini's theorem, we find for any $f \in L^2(\R^n)$ that
\begin{align*}
\| f \|_{L^2}^2 & = \int_{\R^n} |f(x)|^2 \, dx = \int_{\R} \left ( \int_{\R^{n-1}} |f(x_1, x_2, \ldots, x_n)|^2 \, dx_2 \ldots d x_n \right ) \, dx_1 \\
& = \int_\R \left ( |f(x_1, x_2, \ldots, x_n)|^{*_1}|^2 \, dx_2 \ldots d x_n \right ) \, d x_1 = \| f^{*_1} \|_{L^2}^2,
\end{align*}
where used the equimeasurability of the functions $f(x_1, \ldots)$ and $f(x_1, \ldots)^{*_1}$ on $\R^{n-1}$ for every $x_1 \in \R$ fixed. By Plancherel's identity, we conclude that (i) is true.

Likewise, we see that (ii) holds true by elementary properties of the Fourier transform. Finally, we mention that (iii) follows from the fact that $\widehat{u^{\sharp_1}} = (\widehat{u}(\xi))^{*_1} \geq 0$ is non-negative and  classical arguments for positive-definite functions; see, e.\,g., \cite{ReSi-78}. 
\end{proof}

\subsection{Rearrangement Inequalities: Steiner meets Fourier}

Recall that the operator $P(D)$ is defined as $\widehat{(P(D) u)}(\xi) = p (\xi) \widehat{u}(\xi)$ through its real-valued multiplier $p : \R^n \to \R$. Furthermore, we recall that for the given velocity $\vB \in \R^n$ we define the operator
$$
P_\vB(D) = P(D) + \ii \vB \cdot \nabla,
$$ 
which has the Fourier symbol $p_\vB(\xi) = p(\xi) - \vB \cdot \xi$.

\begin{lemma} \label{lem:SteinerFourier}
Let $n \geq 2$. Suppose that $P(D)$ satisfies  Assumptions \ref{ass1} and \ref{ass2} with some $s \geq 1/2$. Let $\eB \in \mathbb{S}^{n-1}$ be some direction and assume that $\vB  \in \R^n$ is parallel to $\eB$. Then it holds that 
$$
\langle u^{\sharp_\eB}, P_\vB(D) u^{\sharp_\eB} \rangle \leq \langle u, P_\vB(D) u \rangle \quad \mbox{for all $u \in H^s(\R^n)$}.
$$
Moreover, we have equality if and only if $|\widehat{u}(\xi)| = (\widehat{u}(\xi))^{*_\eB}$ for almost every $\xi \in \R^n$.
\end{lemma}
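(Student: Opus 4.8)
The plan is to move to Fourier space, normalize $\eB=\eB_1=(1,0,\dots,0)$ by an orthogonal change of variables, and then carry out a rearrangement estimate hyperplane by hyperplane in $\{\xi_1\}\times\R^{n-1}$ (writing $\xi=(\xi_1,\xi')\in\R\times\R^{n-1}$), isolating the skew ``boost'' term since it does not involve $\xi'$.

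First I would reduce to $\eB=\eB_1$. Assumptions \ref{ass1} and \ref{ass2}, the hypothesis $\vB\parallel\eB$, the form $\langle u,P_\vB(D)u\rangle$, and the operation $\sharp_\eB$ are all covariant under $\mathrm{O}(n)$, so after a rotation $\vB=|\vB|\eB_1$ and, by Assumption \ref{ass2}, $p=p(\xi_1,|\xi'|)$ with $|\xi'|\mapsto p(\xi_1,|\xi'|)$ \emph{strictly} increasing. By Plancherel and $\widehat{u^{\sharp_1}}=(\widehat u)^{*_1}$,
$$
\langle u^{\sharp_1},P_\vB(D)u^{\sharp_1}\rangle=\int_{\R^n}\big(p(\xi)-|\vB|\xi_1\big)\big((\widehat u)^{*_1}(\xi)\big)^2\diff\xi,\qquad \langle u,P_\vB(D)u\rangle=\int_{\R^n}\big(p(\xi)-|\vB|\xi_1\big)|\widehat u(\xi)|^2\diff\xi,
$$
and all these integrals are finite because $u\in H^s$ and, by Assumption \ref{ass1} with $s\ge\tfrac12$, $|\vB\cdot\xi|\lesssim 1+|\xi|^{2s}$. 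Since for each fixed $\xi_1$ the operation $*_1$ rearranges only in the $\xi'$-variable and preserves $\int_{\R^{n-1}}|\cdot|^2\diff\xi'$, Fubini shows that the skew part $\int_{\R^n}|\vB|\xi_1\,|\widehat u|^2\diff\xi$ is unchanged by $\sharp_1$. Thus the lemma reduces to the single inequality $\int_{\R^n}p\,\big((\widehat u)^{*_1}\big)^2\le\int_{\R^n}p\,|\widehat u|^2$, with equality iff $|\widehat u|=(\widehat u)^{*_1}$ a.e.

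To get this, I would fix $\xi_1$ and put $g:=\widehat u(\xi_1,\cdot)\in L^2(\R^{n-1})$ and $w:=p(\xi_1,\cdot)$, a continuous nondecreasing function on $[0,\infty)$. Writing $w(|\xi'|)=w(0)+\int_{(0,\infty)}\chi_{\{\rho<|\xi'|\}}\diff w(\rho)$ with the nonnegative Lebesgue--Stieltjes measure $\diff w$, Tonelli gives
$$
\int_{\R^{n-1}}w(|\xi'|)\,|g|^2\diff\xi'=w(0)\,\|g\|_{L^2}^2+\int_{(0,\infty)}\Big(\|g\|_{L^2}^2-\int_{B_\rho}|g|^2\Big)\diff w(\rho),
$$
where $B_\rho\subset\R^{n-1}$ is the centered ball of radius $\rho$; the same formula holds with $g$ replaced by $g^*:=(|g|)^*$, since $\|g^*\|_{L^2}=\|g\|_{L^2}$. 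The Hardy--Littlewood inequality (testing $|g|^2$ against $\chi_{B_\rho}=\chi_{B_\rho}^*$, using $(|g|^2)^*=(g^*)^2$) yields $\int_{B_\rho}|g|^2\le\int_{B_\rho}(g^*)^2$ for every $\rho>0$, and since $\diff w\ge0$, comparing the two representations gives $\int_{\R^{n-1}}w(|\xi'|)(g^*)^2\le\int_{\R^{n-1}}w(|\xi'|)|g|^2$. Integrating in $\xi_1$ proves the inequality. For the equality clause, equality forces, for a.e.\ $\xi_1$, the identity $\int_{B_\rho}|g|^2=\int_{B_\rho}(g^*)^2$ for $\diff w(\xi_1,\cdot)$-a.e.\ $\rho$; strict monotonicity of $w(\xi_1,\cdot)$ makes $\diff w(\xi_1,\cdot)$ charge every nonempty open interval, so the continuous nonpositive map $\rho\mapsto\int_{B_\rho}(|g|^2-(g^*)^2)$ vanishes identically, i.e.\ for all $\rho>0$; a layer-cake argument then shows the superlevel sets $\{|g|^2>t\}$ coincide, up to null sets, with the centered balls $\{(g^*)^2>t\}$ for a.e.\ $t$, hence $|g|=g^*$ a.e.\ on $\R^{n-1}$. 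Doing this for a.e.\ $\xi_1$ gives $|\widehat u|=(\widehat u)^{*_1}$ a.e.; the converse is immediate, and undoing the rotation recovers the statement with $*_\eB$.

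The hard part will be the rigidity step --- upgrading ``equality of the ball integrals $\int_{B_\rho}|g|^2$ and $\int_{B_\rho}(g^*)^2$ for a $\diff w$-generic set of radii'' to the pointwise identity $|\widehat u|=(\widehat u)^{*_1}$. This is precisely where the \emph{strict} monotonicity in Assumption \ref{ass2} is indispensable (the inequality itself only needs monotonicity), and it has to be combined with a continuity/layer-cake argument and some Fubini bookkeeping to pass between the fiberwise statements and the statement on all of $\R^n$.
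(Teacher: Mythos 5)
Your proof is correct and complete, but it proceeds by a genuinely different (dual) layering. The paper's proof (Steps~1--2 of Lemma~\ref{lem:SteinerFourier}) layers in the Fourier amplitude: it first proves the set inequality $\int_{A^*} p_\vB(\xi_1,|\xi'|)\,d\xi' \le \int_A p_\vB(\xi_1,|\xi'|)\,d\xi'$ for measurable $A\subset\R^{n-1}$ directly from the monotonicity of $|\xi'|\mapsto p_\vB(\xi_1,|\xi'|)$, and then extends to general nonnegative $f$ via the layer-cake decomposition $f=\int_0^\infty\chi_{\{f>t\}}\,dt$, applied with $f=|\widehat u|^2$. You instead layer in the weight, writing $p(\xi_1,|\xi'|)=p(\xi_1,0)+\int_{(0,\infty)}\chi_{\{\rho<|\xi'|\}}\,dw(\rho)$ and reducing fiber-by-fiber to the classical Hardy--Littlewood inequality $\int_{B_\rho}|g|^2\le\int_{B_\rho}(g^*)^2$. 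These are two faces of the same bathtub-type estimate, and both use the same downstream level-set rigidity to upgrade equality in the integral inequality to $|\widehat u|=(\widehat u)^{*_\eB}$ a.e. What your route buys: a cleaner modular structure (fiberwise Hardy--Littlewood is a named black box, and strict monotonicity is visibly needed only for rigidity, not for the inequality), plus the useful observation that the boost term $\int|\vB|\xi_1|\widehat u|^2$ is \emph{exactly} preserved by $\sharp_\eB$ because $*_1$ preserves $\int_{\R^{n-1}}|\cdot|^2\,d\xi'$ on each slice --- the paper instead works with $p_\vB$ throughout and never isolates this invariance. What it costs: the equality step is two stages (from $dw$-a.e.\ $\rho$ to all $\rho$ via continuity and the support of $dw$, then the level-set identification), whereas the paper's equality argument lands directly on the level sets of $f$; and your Stieltjes/Tonelli manipulations require the minor bookkeeping, which you correctly flag, that for a.e.\ fixed $\xi_1$ the slice integrals $\int w(\xi_1,|\xi'|)|\widehat u(\xi_1,\xi')|^2\,d\xi'$ are finite, so that the representation converges absolutely fiberwise.
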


\begin{proof}
By  a suitable rotation in $\R^n$, we can assume without loss of generality that $\eB=\eB_1=(1,0, \ldots,0)$ holds and thus $\vB= (|\vB|,0,\ldots,0)$. As before, we decompose $\xi \in \R^n$ as $\xi = (\xi_1, \xi') \in \R \times \R^{n-1}$. With some slight abuse of notation we can write $p(\xi) = p(\xi_1, |\xi'|)$ and $p_\vB(\xi)=p_\vB(\xi_1, |\xi'|) = p(\xi_1, |\xi'|) - |\vB| \xi_1$.

We adapt the following arguments in \cite{LeSo-18} to our setting here.

{\bf Step 1.} Suppose $A \subset \R^{n-1}$ is a measurable set with finite Lebesgue measure $\mu_{n-1}(A) < \infty$ in $n-1$ dimensions. For notational simplicity, we shall simply write $\mu$ instead of $\mu_{n-1}$ in the following. Let $A^*$ denote its symmetric-decreasing rearrangement in $\R^{n-1}$, i.\,e., the set $A^*=B_R(0) \subset \R^{n-1}$ is the open ball centered at the origin with measure $\mu(A^*)=\mu(A)$.  We claim that the following inequality holds
\be \label{ineq:Steiner1}
\int_{A^*} p_\vB(\xi_1, |\xi'|) \diff \xi' \leq \int_{A} p_\vB(\xi_1, |\xi'|) \diff \xi'
\ee
for any $\xi_1 \in \R$. Indeed, we have $\mu(A \setminus A^*) = \mu(A) - \mu(A \cap A^*)$ and $\mu(A^* \setminus A) = \mu(A^*) - \mu(A \cap A^*)$. Since $\mu(A)=\mu(A^*)$, we deduce that $\mu(A\setminus A^*) = \mu(A^* \setminus A)$. Next we recall that $|\xi'| \mapsto p(\xi_1, |\xi'|)$ is strictly increasing for all $\xi_1 \in \R$ fixed. Hence the map $|\xi'| \mapsto p_\vB(\xi_1, |\xi'|) = p(\xi'1, |\xi'|) - |\vB| \xi_1$ is strictly increasing as well. Since $|\xi'| \geq R$ for $\xi' \in A\setminus A^*$ and $|\xi'| < R$ for $\xi' \in A^* \setminus A$, this implies that
\begin{align}
\int_{A^* \setminus A} p_\vB(\xi_1, |\xi'|) \diff \xi' & \leq \int_{A^* \setminus A} p_\vB(\xi_1, R) \diff \xi ' = p_\vB(\xi_1, R) \mu(A^*\setminus A) \nonumber \\ &
= p_\vB(\xi_1, R) \mu(A \setminus A^*)  = \int_{A \setminus A^*} p_\vB(\xi_1, R) \diff \xi' \leq \int_{A \setminus A^*} p_\vB(\xi_1, \xi') \diff \xi' . \label{ineq:rearrnge_f}
\end{align}
Therefore we conclude
\begin{align*}
\int_{A^*} p_\vB(\xi_1, |\xi'|) \diff \xi' & = \int_{A^* \setminus A} p_\vB(\xi_1, |\xi'|) \diff \xi' + \int_{A^* \cap A} p_\vB(\xi_1, |\xi'|) \diff \xi \\
& \leq \int_{A \setminus A^*} p_\vB(\xi_1, |\xi'|) \diff \xi ' +  \int_{A^* \cap A} p_\vB(\xi_1, |\xi'|) \diff \xi = \int_{A} p_\vB(\xi_1, |\xi'|) \diff \xi ',
\end{align*}
which proves \eqref{ineq:Steiner1}.

{\bf Step 2.} Now let $f : \R^n \to \R_+$ be a nonnegative measurable function vanishing at infinity. We claim that
\be \label{ineq:Steiner2}
\int_{\R^n} f^{*_1} (\xi) p_\vB(\xi_1, |\xi'|) \diff \xi \leq \int_{\R^n} f(\xi) p_\vB(\xi_1, |\xi'|) \diff \xi ,
\ee 
where $f^{*_1}$ denotes the Steiner rearrangement in $n-1$ codimensions. To show the claimed inequality, we note that $f(\xi) = \int_0^\infty \chi_{\{ f > t\}}(\xi) \diff t$ by the layer cake representation and accordingly we have $f^{*_1}(\xi) = \int_0^\infty \chi_{\{ f > t \}^{*_1}}(\xi) \diff t$. Thus, by applying Fubini's theorem, we need to show that
\begin{align*}
\int_0^\infty \left ( \int_\R \left ( \int_{\R^{n-1}} \chi_{\{ f > t\}^{*_1}}(\xi_1, \xi') p_\vB(\xi_1, |\xi'|) \diff \xi' \right ) \diff \xi_1 \right ) \diff t & \leq \\
 \int_0^\infty \left ( \int_\R \left ( \int_{\R^{n-1}} \chi_{\{ f > t\}}(\xi_1, \xi') p_\vB(\xi_1, |\xi'|) \diff \xi' \right ) \diff \xi_1 \right ) \diff t .
\end{align*}
If we use \eqref{ineq:Steiner1} with the sets $B_{\xi_1} = \{ \xi' \in \R^{n-1} : f(\xi_1, \xi') > t \} \subset \R^{n-1}$ with $\xi_1 \in \R$, the definition of $*_1$ implies that $$\int_{\R^{n-1}} \chi_{\{ f > t\}^{*_1}}(\xi_1, \xi') p_\vB(\xi_1, |\xi'|) \diff \xi' \leq \int_{\R^{n-1}} \chi_{\{ f > t\}}(\xi_1, \xi') p_\vB(\xi_1, |\xi'|) \diff \xi'$$ for any $\xi_1 \in \R$. By integrating this inequality over $\xi_1$ and $t$, we arrive at the desired inequality stated in \eqref{ineq:Steiner2}.

{\bf Step 3.} By Plancherel's theorem and the definition of $u^{\sharp_1}$, the claimed inequality is equivalent to
$$
 \int_{\R^n} p_\vB(\xi) |(\widehat{u}(\xi))^{*_1}|^2 \diff \xi \leq \int_{\R^n} p_\vB(\xi) |\widehat{u}(\xi)|^2 \diff \xi.
$$
We now define the nonnegative function $f : \R^n \to \R_+$ with $f(\xi) = |\widehat{u}(\xi)|^2$. Clearly, $f$ is measurable and vanishes at infinity. Furthermore, we note that $f^{*_1}(\xi) = (|\widehat{u}(\xi)|^2)^{*_1}= |(\widehat{u}(\xi))^{*_1}|^2$, where the last equality follows from basic properties of the rearrangement $*_1$. By applying \eqref{ineq:Steiner2}, we obtain the claimed inequality stated in Lemma \ref{lem:SteinerFourier}.

{\bf Step 3.} Finally, we suppose that equality $\langle u^{\sharp_1}, P_\vB(D) u^{\sharp_1} \rangle = \langle u, P_\vB(D) u \rangle$ holds. Since $|\xi'| \mapsto p_\vB(\xi_1, |\xi'|)$ is strictly increasing, equality holds in \eqref{ineq:rearrnge_f} if and only if $\mu(A \setminus A^*) = 0$. Since $\mu(A) = \mu(A^*)$, this means that the sets $A$ and $A^*$ coincide (up to a set of measure zero). Therefore, by using the layer-cake representation for $f=|\widehat{u}|^2$ in \eqref{ineq:Steiner2}, we deduce the equality $f(\xi) = f^{*_1}(\xi)$ for almost every $\xi \in \R^n$, which is equivalent to $|\widehat{u}(\xi)| = (\widehat{u}(\xi))^{*_1}$ almost everywhere.

The proof of Lemma \ref{lem:SteinerFourier} is now complete.
\end{proof}

Next, we turn to a rearrangement inequality for $L^p$-norms. By arguing along the lines in \cite{LeSok}, we can prove the following result.

\begin{lemma} \label{lem:LpSteiner}
Let $n \geq 2$, $p \in 2 \N \cup \{ \infty \}$, and $\eB \in \mathbb{S}^{n-1}$. Then for all $u \in L^2(\R^n) \cap \FF(L^{p'}(\R^n))$ with $1/p+1/p'=1$, we have  $u^{\sharp_\eB} \in L^2(\R^n) \cap \FF(L^{p'}(\R^n))$ and 
$\| u \|_{L^p} \leq \| u^{\sharp_\eB} \|_{L^p}$. 
\end{lemma}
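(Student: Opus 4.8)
The plan is to reduce Lemma~\ref{lem:LpSteiner} to a statement about a single Steiner symmetrization in $n-1$ codimensions applied in Fourier space, and then to prove that statement by the Brascamp--Lieb--Luttinger (multilinear Riesz--Sobolev) inequality. First I would reduce to the case $\eB = \eB_1 = (1,0,\ldots,0)$ by the usual rotation argument (the operations $\sharp_\eB$ and $L^p$-norms are rotation-equivariant/invariant), so it suffices to prove $\|u\|_{L^p} \leq \|u^{\sharp_1}\|_{L^p}$ for $u \in L^2(\R^n) \cap \FF(L^{p'}(\R^n))$, with the membership claim $u^{\sharp_1} \in L^2 \cap \FF(L^{p'})$ handled along the way. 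The $L^2$ part of that membership is just Lemma~\ref{lem:basic_properties}(i); the $\FF(L^{p'})$ part amounts to $|\widehat{u}|^{*_1} \in L^{p'}(\R^n)$, which follows from equimeasurability of $\widehat u(\xi_1,\cdot)$ and its rearrangement on $\R^{n-1}$ together with Fubini, exactly as in the proof of Lemma~\ref{lem:basic_properties}(i).

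The heart of the matter is the case $p = 2m$ with $m \in \N$ (the case $p = \infty$ is easy: $\|u\|_{L^\infty} \le (2\pi)^{-n/2}\|\widehat u\|_{L^1} = (2\pi)^{-n/2}\||\widehat u|^{*_1}\|_{L^1}$, and for $u^{\sharp_1}$ one has $\|u^{\sharp_1}\|_{L^\infty} = u^{\sharp_1}(0) = (2\pi)^{-n/2}\||\widehat u|^{*_1}\|_{L^1}$ by positive-definiteness, Lemma~\ref{lem:basic_properties}(iii), so in fact equality holds). For $p = 2m$, write $\|u\|_{L^{2m}}^{2m} = \|u^m\|_{L^2}^2 \cdot$(constant) and use the convolution structure of the Fourier transform: by Plancherel,
\be
\| u \|_{L^{2m}}^{2m} = c_{n,m} \int_{\R^n} \big| (\widehat u * \cdots * \widehat u)(\eta) \big|^2 \diff \eta ,
\ee
with $m$ factors of $\widehat u$, and hence, expanding the square and writing $g := \widehat u$, $\bar g(\xi) := \overline{g(-\xi)}$,
\be
\| u \|_{L^{2m}}^{2m} = c_{n,m} \int_{(\R^n)^{2m}} \Big( \prod_{j=1}^{m} g(\xi_j) \Big) \Big( \prod_{j=1}^{m} \bar g(\xi_{m+j}) \Big) \, \delta\Big( \textstyle\sum_{j=1}^{2m}\pm\xi_j \Big) \, \diff \xi .
\ee
Taking absolute values inside gives $\|u\|_{L^{2m}}^{2m} \le c_{n,m} I[\,|g|, \ldots, |g|\,]$, where $I$ is the same integral with $g, \bar g$ replaced by $|g|$; on the other hand $\widehat{u^{\sharp_1}} = |g|^{*_1} \ge 0$, so $\|u^{\sharp_1}\|_{L^{2m}}^{2m} = c_{n,m} I[\,|g|^{*_1}, \ldots, |g|^{*_1}\,]$ with no loss. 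Thus it remains to prove $I[|g|,\ldots,|g|] \le I[|g|^{*_1},\ldots,|g|^{*_1}]$. This is precisely a Brascamp--Lieb--Luttinger inequality: the integrand is a product of functions of linear forms in the integration variables, integrated against (the limit of approximations to) a $\delta$-constraint that is itself a product over translation-invariant linear forms, so applying the one-dimensional Riesz--Sobolev/BLL rearrangement inequality slicewise in each of the $n-1$ coordinate directions orthogonal to $\eB_1$ (holding the $\eB_1$-coordinates fixed and using Fubini) increases $I$ and replaces each $|g|$ by $|g|^{*_1}$. One must be slightly careful with the $\delta$ distribution: I would either work with a Gaussian approximation $\delta_\varepsilon$ of the constraint (which is itself a Gaussian in the linear forms, hence admissible for BLL and its own Steiner rearrangement is itself) and pass to the limit $\varepsilon \to 0$, or invoke the standard reformulation of $L^p$-norms in terms of $m$-fold convolutions directly and apply BLL to $\||g|*\cdots*|g|\|_{L^2}^2 = \int (|g| * \widetilde{|g|}) \cdots$ type expressions.

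The main obstacle I anticipate is the rigorous handling of the $\delta$-function constraint and the verification that the slicewise rearrangement in $\R^{n-1}$ is legitimate, i.e.\ that fixing the $\xi_1$-coordinates of all $2m$ variables and Steiner-rearranging in the perpendicular directions genuinely leaves the constraint $\sum \pm \xi_j = 0$ compatible and the integrand in product-of-linear-forms form — it does, because the constraint decouples across the $\xi_1$-hyperplane directions, but this needs to be stated carefully. A clean way to sidestep most of the technicalities is to cite the relevant consequence of BLL already used elsewhere in the paper (the proof sketch of Theorem~\ref{thm:sym} invokes ``Brascamp--Lieb--Luttinger in Fourier space''), and to organize the argument exactly as in \cite{LeSok}, reducing to: (a) $\|u\|_{L^{2m}}^{2m}$ equals a BLL-type multilinear integral of $|\widehat u|$; (b) BLL applied in the $n-1$ directions perpendicular to $\eB$; (c) the reverse identity for $u^{\sharp_1}$ because $\widehat{u^{\sharp_1}} \ge 0$. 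The membership statement $u^{\sharp_\eB} \in L^2 \cap \FF(L^{p'})$ then drops out since the right-hand side of the inequality is finite by hypothesis and $\widehat{u^{\sharp_\eB}} = |\widehat u|^{*_\eB}$ has the same $L^{p'}$-norm as $\widehat u$.
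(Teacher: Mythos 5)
Your proposal is correct and follows essentially the same route as the paper: reduce $\|u\|_{L^{2m}}^{2m}$ to a $(2m-1)$-fold convolution of $\widehat u$ and $\widehat{\bar u}$ evaluated at zero, apply a Steiner version of Brascamp--Lieb--Luttinger (the paper's Proposition~\ref{prop:BLL}, proved by Fubini-slicing over the $\eB$-direction and classical BLL in the orthogonal $(n-1)$-dimensional hyperplanes), and note equality on the rearranged side because $\widehat{u^{\sharp_\eB}} = |\widehat u|^{*_\eB} \ge 0$; the $p=\infty$ case is handled identically. The $\delta$-function technicalities you worry about are avoided in the paper exactly by the ``value-at-zero of a convolution plus Fubini'' formulation you offer as the cleaner alternative at the end, so no extra mollification argument is needed.
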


As a technical ingredient needed for the proof of Lemma \ref{lem:LpSteiner}, we need the following result concerning multiple convolutions in $\R^n$, which is a consequence of the classical {\em Brascamp-Lieb-Luttinger inequality}; see Lemma \ref{lem:BLL} below.

\begin{proposition}\label{prop:BLL}
Let $n \geq 2$, $\eB \in \mathbb{S}^{n-1}$, and $m \geq 2$. For any non-negative measurable functions $u_1, u_2,\dots, u_m:\R^n\to\R_+$ vanishing at infinity, we have
  \bes
  (u_1 \ast \ldots \ast u_m)(0)\leq (u_1^{*_\eB}\ast \ldots\ast u_m^{*_\eB})(0).
  \ees
\end{proposition}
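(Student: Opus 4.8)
\textbf{Plan for the proof of Proposition \ref{prop:BLL}.} The goal is to reduce the statement about the value at the origin of an $m$-fold convolution to a direct application of the Brascamp--Lieb--Luttinger (BLL) inequality, carried out slice-by-slice in the $(n-1)$-dimensional hyperplanes perpendicular to $\eB$. After a rotation we may assume $\eB=\eB_1=(1,0,\ldots,0)$, write $\xi=(\xi_1,\xi')\in\R\times\R^{n-1}$, and recall that the Steiner symmetrization $u_k^{*_1}$ is, by definition, the $(n-1)$-dimensional symmetric-decreasing rearrangement of $\xi'\mapsto u_k(\xi_1,\xi')$ for each fixed $\xi_1$.

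First I would write out the convolution at the origin as an integral over the hyperplane $\{\,x_1+\cdots+x_m=0\,\}$, or more concretely unfold it as
\be
(u_1\ast\cdots\ast u_m)(0)=\int_{\R^{m-1}}u_1(y_1)\,u_2(y_2)\cdots u_{m-1}(y_{m-1})\,u_m(-y_1-\cdots-y_{m-1})\,dy_1\cdots dy_{m-1},
\ee
where each $y_j\in\R^n$. Splitting every variable as $y_j=(t_j,z_j)\in\R\times\R^{n-1}$, the integrand factors so that the $t_j$-integration and the $z_j$-integration decouple in structure: for fixed $(t_1,\ldots,t_{m-1})$ the inner integral over $(z_1,\ldots,z_{m-1})\in(\R^{n-1})^{m-1}$ is exactly of the form treated by the BLL inequality, namely
\be
\int_{(\R^{n-1})^{m-1}}\prod_{k=1}^{m}u_k\bigl(t_k,\,L_k(z_1,\ldots,z_{m-1})\bigr)\,dz_1\cdots dz_{m-1},
\ee
where $t_m:=-t_1-\cdots-t_{m-1}$ and the $L_k$ are the obvious linear maps ($L_k(z)=z_k$ for $k<m$, $L_m(z)=-z_1-\cdots-z_{m-1}$). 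Applying Lemma \ref{lem:BLL} (the BLL inequality) in $\R^{n-1}$, with $t_1,\ldots,t_{m-1}$ frozen, gives that this inner integral does not decrease when each $u_k(t_k,\cdot)$ is replaced by its symmetric-decreasing rearrangement on $\R^{n-1}$, i.e.\ by $u_k^{*_1}(t_k,\cdot)$. Then I integrate the resulting inequality over $(t_1,\ldots,t_{m-1})\in\R^{m-1}$ and reassemble, recognizing the right-hand side as $(u_1^{*_1}\ast\cdots\ast u_m^{*_1})(0)$. Finally, undoing the rotation turns $*_1$ back into $*_\eB$ and yields the claim.

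A few technical points deserve care but are routine. One must check that the functions $\xi'\mapsto u_k(\xi_1,\xi')$ vanish at infinity for a.e.\ $\xi_1$ (which follows from Fubini applied to the hypothesis that each $u_k$ vanishes at infinity on $\R^n$), so that the slice-wise rearrangements are well defined; one should also justify the use of Fubini--Tonelli to split the $\R$- and $\R^{n-1}$-integrations, which is legitimate since all integrands are non-negative, allowing the value $+\infty$ on both sides without affecting the inequality. It is also worth noting that no integrability of the convolution is assumed: both sides are interpreted as integrals of non-negative functions, possibly infinite, and the inequality is an inequality in $[0,+\infty]$.

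The main obstacle — though it is more a matter of bookkeeping than of genuine difficulty — is setting up the ``frozen-$\xi_1$'' decomposition so that the hypotheses of the BLL inequality are met verbatim: one needs the linear maps $L_k$ acting on $(\R^{n-1})^{m-1}$ and the compatibility that $\sum t_k=0$ is exactly the constraint making the product $\prod_k u_k(t_k,L_k(z))$ the BLL integrand on each slice. Once this alignment is in place, the proof is a one-line application of Lemma \ref{lem:BLL} followed by integration in the remaining $t$-variables. I would therefore spend most of the write-up making the change of variables and the slicing explicit, and then simply invoke BLL.
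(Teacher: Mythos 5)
Your proposal is correct and follows essentially the same strategy as the paper: rotate so that $\eB=\eB_1$, use Fubini to split off the first coordinate, apply the Brascamp--Lieb--Luttinger inequality slice-by-slice in $\R^{n-1}$ with the first coordinates frozen (which is precisely what the paper's matrix $B$ encodes via the functional $I_{n-1}$), and then integrate back. The only slip is notational -- the outer integral should be over $(\R^n)^{m-1}$ rather than $\R^{m-1}$ -- and your remark that the slices vanish at infinity for a.e.~$\xi_1$ via Fubini is a sound (and slightly more explicit) justification than the paper provides.
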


\begin{proof}
Without loss of generality we can assume $\eB = \eB_1 =(1,0,\ldots,0) \in \R^n$. A calculation using Fubini's theorem yields
\begin{equation}
     \begin{split}
       & (u_1 \ast \cdots \ast u_m)(0) \\ 
     = & \int_{\R}\cdots\int_{\R} 
          I_{n-1} \bigg[u_1(y^1_1,\cdot),\dots,u_{m-1}(y^{m-1}_1,\cdot),u_m\bigg(-\sum_{i=1}^{m-1}y^i_1,\cdot\bigg)\bigg] 
          \diff y^1_1 \cdots \diff y^{m-1}_1.
   \end{split}
\end{equation}
  Here $I_{n-1}$ is defined according to \eqref{eq:quantity-in-BLL-corresponding-to-convolution-at-zero} with $B$ as the $(m-1)\times m$-matrix given by
  \bes 
    B = \left(\begin{array}{cccc|c}
1& 0 &\cdots &0 &-1\\
0&1&\cdots &0&-1\\
\vdots & &\ddots &\vdots & \vdots \\
0&0&\cdots &1&-1\\
\end{array}\right)
  \ees
and the matrix in the left block is the $(m-1)\times(m-1)$-unit matrix. By applying Lemma \ref{lem:BLL} with $d=n-1$ and recalling the definition of $*_1$, we deduce that
\bes 
   \begin{split}
       &   (u_1\ast \cdots\ast u_m)(0) \\
   =   &   \int_{\R}\cdots\int_{\R} I_{n-1} 
           \bigg[u_1(y^1_1,\cdot),\dots,u_{m-1}(y^{m-1}_1,\cdot),u_m\bigg(-\sum_{i=1}^{m-1}y^i_1,\cdot\bigg)\bigg]  
           \diff y^1_1 \cdots \diff y^{m-1}_1 \\
   \leq &  \int_{\R}\cdots\int_{\R} I_{n-1} 
           \bigg[u_1(y^1_1,\cdot)^*,\dots,u_{m-1}(y^{m-1}_1,\cdot)^*,u_m\bigg(-\sum_{i=1}^{m-1}y^i_1,\cdot\bigg)^*\bigg] 
           \diff y^1_1 \cdots \diff y^{m-1}_1 \\
    =   &  \int_{\R}\cdots\int_{\R} I_{n-1} 
           \bigg[u_1^{*_1}(y^1_1,\cdot),\dots,u_{m-1}^{*_1}(y^{m-1}_1,\cdot)^*,u_m^{*_1}
           \bigg(-\sum_{i=1}^{m-1}y^i_1,\cdot\bigg)\bigg] 
           \diff y^1_1 \cdots \diff y^{m-1}_1  \\
    =  &   (u_1^{*_1}\ast \cdots\ast u_m^{*_1})(0),
   \end{split}  
  \ees 
 where the last equality again follows from applying Fubini's theorem.
\end{proof}

\begin{proof}[Proof of Lemma \ref{lem:LpSteiner}] Without loss of generality we can assume that $\eB= \eB_1=(1,0,\ldots,0)$. The case of $p=2$ is clear. Let us assume $p=2m$ with some integer $m \geq 2$ so that the corresponding dual exponent is given by $p' = \frac{2m}{2m-1}$. Since $u \in L^p(\R^n)\cap \mathcal{F}(L^{p'}(\R^n))$, we can apply the version of the convolution lemma in \cite{LeSo-18} to conclude
\begin{equation}
\| u \|_{L^p}^p = \mathcal{F}(|u|^{2m})(0) = (\widehat{u}\ast \widehat{\widebar{u}} \ast \cdots \ast \widehat{u}\ast \widehat{\widebar{u}})(0),
\end{equation}
where the number of convolutions on the right-hand side equals $2m-1$. By Proposition \ref{prop:BLL}, we obtain that
\bes
(\widehat{u}\ast \widehat{\widebar{u}} \ast \cdots \ast \widehat{u}\ast \widehat{\widebar{u}})(0) \leq (\widehat{u}^{*_{1}}\ast (\widehat{\widebar{u}})^{*_{1}} \ast \cdots \ast (\widehat{u})^{*_{1}}\ast (\widehat{\widebar{u}})^{*_{1}})(0) = \mathcal{F}(|u^{\sharp_{1}}|^{2m})(0) = \| u^{\sharp_1} \|_{L^p}^p,
\ees
where we also used the fact that $\mathcal{F}(\widebar{u^{\sharp_{1}}}) = \mathcal{F}(\widebar{u})^{\ast_{1}}$ and the definition of $\sharp_1$. 

Finally, let us take $p=\infty$ and thus $p'=1$. We find, by using Fubini's theorem,
\begin{align*}
\norm{u}_{\infty} & \leq \int_{\R^d}|\widehat{u}(\xi)| \, d\xi = \int_{\R} \left ( \int_{\R^{n-1}} |\widehat{u}(\xi_1, \xi')| \, d \xi' \right ) d \xi_1 \\
&  =  \int_{\R} \left ( \int_{\R^{n-1}} \widehat{u}^{\sharp_1}(\xi_1, \xi') \, d\xi' \right ) d\xi_1 = u^{\sharp_{1}}(0).
\end{align*}
Since $\| u^{\sharp_1} \|_{L^\infty} = u^{\sharp_1}(0)$ holds by Lemma \ref{lem:basic_properties} (iii), we complete the proof.
\end{proof}

\section{Proof of Theorem \ref{thm:sym}}

We divide the proof of Theorem \ref{thm:sym} into two parts as follows. First, as the essential key point, we show that $\{ \xi \in \R^n : |\widehat{Q}_{\omega, \vB}(\xi)| > 0 \}$ is a connected set in $\R^n$. This fact then enables us to apply the recent rigidity result  \cite{LeSo-18} for the Hardy-Littlewood majorant problem in $\R^n$ to conclude the proof.

\subsection{Connectedness of the Set $\{ |\widehat{Q}_{\omega, \vB}| > 0\}$}
We start with with some notational preliminaries. Given two sets $X, Y \subset \R^n$, we shall use 
$$
X\oplus Y = \{ x + y : x \in X, \; y \in Y \}
$$ 
to denote their {\bf Minkowski sum}. Likewise, we denote their Minkowski difference by
$$
X \ominus Y = \{ x-y : x \in X, \; y \in Y \}.
$$ 
Furthermore, for a function $f : \R^n \to \R$ we use the short-hand notation
$$
\{ f > 0 \} = \{ x \in \R^n : f(x) > 0 \}
$$
throughout the following.

\begin{lemma} \label{lem:convo}
Let $f,g \in \R^n \to [0, \infty)$ be two non-negative and continuous functions. Assume that their convolution
$$
(f \ast g)(x) = \int_{\R^n} f(x-y) g(y) \diff y 
$$
has finite values for all $x \in \R^n$. Then it holds that 
$$
\{ f \ast g > 0 \} = \{ f > 0 \} \oplus \{ g > 0 \} .
$$
\end{lemma}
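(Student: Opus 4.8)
The plan is to prove the two set inclusions separately, using continuity crucially for one direction and nothing but positivity of the integrand for the other.

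First I would establish $\{f > 0\} \oplus \{g > 0\} \subseteq \{f \ast g > 0\}$. Fix a point $x = a + b$ with $f(a) > 0$ and $g(b) > 0$. By continuity of $f$ and $g$, there is some radius $r > 0$ such that $f > \tfrac{1}{2}f(a)$ on the ball $B_r(a)$ and $g > \tfrac{1}{2}g(b)$ on the ball $B_r(b)$. Then for $y$ ranging over $B_r(b)$ we have $x - y \in B_r(a)$, so the integrand $f(x-y)g(y)$ is bounded below by $\tfrac{1}{4}f(a)g(b) > 0$ on a set of positive measure. Since $f, g \geq 0$ everywhere, the full integral $(f \ast g)(x)$ is therefore strictly positive. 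Hence $x \in \{f \ast g > 0\}$.

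Next I would prove the reverse inclusion $\{f \ast g > 0\} \subseteq \{f > 0\} \oplus \{g > 0\}$, equivalently its contrapositive: if $x \notin \{f > 0\} \oplus \{g > 0\}$, then $(f \ast g)(x) = 0$. Suppose $x$ is not in the Minkowski sum. I claim that for every $y \in \R^n$, at least one of $f(x - y) = 0$ or $g(y) = 0$ holds: indeed, if both $f(x-y) > 0$ and $g(y) > 0$, then $x - y \in \{f > 0\}$ and $y \in \{g > 0\}$, so $x = (x-y) + y \in \{f > 0\} \oplus \{g > 0\}$, a contradiction. Therefore the integrand $f(x-y)g(y)$ vanishes identically in $y$, and so $(f \ast g)(x) = 0$, i.e.\ $x \notin \{f \ast g > 0\}$. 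Combining the two inclusions gives the asserted equality.

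I do not expect any serious obstacle here; this is essentially a bookkeeping lemma. The only place where hypotheses genuinely enter is the first inclusion, which uses continuity (to get an open neighborhood on which $f$ and $g$ are bounded below) and non-negativity (so that local positivity of the integrand is not cancelled elsewhere). The finiteness assumption on $f \ast g$ is not actually needed for the argument as written — the integral is well-defined in $[0, \infty]$ since the integrand is non-negative — but it is harmless to keep it, and in the intended application to $f = g = |\widehat{Q}_{\omega,\vB}|$ it holds automatically. One mild point worth a sentence in the writeup: $\{f \ast g > 0\}$ is open because $f \ast g$ is continuous (a convolution of a continuous function bounded near each point against a locally integrable one), while $\{f > 0\}$ and $\{g > 0\}$ are open by continuity of $f$ and $g$, so the identity is an identity of open sets — which is exactly the form needed downstream for the Minkowski-sum iteration leading to $\Omega = \bigoplus_{k=1}^m \Omega$.
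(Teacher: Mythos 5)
Your proof is correct and follows essentially the same route as the paper's: both directions are established separately, with continuity furnishing a neighborhood of positivity for the inclusion $\{f>0\}\oplus\{g>0\}\subseteq\{f\ast g>0\}$, and non-negativity together with the support constraint forcing the integrand to vanish for the reverse inclusion. The paper phrases the second direction using the Minkowski-difference set $(\{x\}\ominus\Omega_f)\cap\Omega_g$ rather than arguing pointwise in $y$, but that is purely notational; your observations about the dispensability of the finiteness hypothesis and the openness of the level sets are also accurate.
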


\begin{proof}
The proof is elementary. For the reader's convenience, we give the details. Let us write $\Omega_f = \{ f > 0 \}$, $\Omega_g = \{ g > 0 \}$ and $\Omega_{f \ast g} = \{ f \ast g > 0 \}$. We suppose that both $f \not \equiv 0$ and $g \not \equiv 0$, since otherwise the claimed result trivially follows.

First, we show that $\Omega_f \oplus \Omega_g \subset \Omega_{f \ast g}$. Let $x = x_1 + x_2$ with $x_1 \in \Omega_f$ and $x_2 \in \Omega_g$. By the continuity of $f$ and $g$, there exists some $\eps > 0$ such that $f > 0$ on $B_\eps(x_1)$ and $g > 0$ on $B_\eps(x_2)$. Thus, by using that $f \geq 0$ and $g \geq 0$ on all of $\R^n$, we get
$$
(f \ast g)(x)  = \int_{\R^n} f(x-y) g(y) \diff y \geq \int_{B_\eps(x_2)} f(x_1 + x_2 - y) g(y) \diff y  > 0,
$$
since $x_1 + x_2 - y \in B_\eps(x_1)$ when $y \in B_\eps(x_2)$. This shows that $\Omega_f \oplus \Omega_g \subset \Omega_{f \ast g}$.

Next, we prove that $\Omega_{f \ast g} \subset \Omega_f \oplus \Omega_g$ holds. Indeed, for every $x \in \R^n$, we can write
$$
(f \ast g)(x) = \int_{\R^n} f(x-y) g(y) \diff y = \int_{(\{x\} \ominus \Omega_f) \cap \Omega_g} f(x-y) g(y) \diff y,
$$ 
since $f(x-\cdot) \equiv 0$ on $\R^n \setminus (\{x \} \ominus \Omega_f)$ and $g \equiv 0$ on $\R^n \setminus \Omega_g$. However, if $x \not \in \Omega_f \oplus \Omega_g$ then $(\{x \} \ominus \Omega_f) \cap \Omega_g = \emptyset$. Thus $(f \ast g)(x) = 0$ for any $x \not \in \Omega_f \oplus \Omega_g$, whence it follows that the inclusion $\Omega_{f \ast g} \subset \Omega_f \oplus \Omega_g$ is valid.
\end{proof}

Next, we establish the following technical result in order to prove Theorem \ref{thm:sym}.

\begin{lemma} \label{lem:topo}
Let $n \geq 2$ and suppose $m \geq 2$ is an integer. Let $f \in L^{m/(m-1)}(\R^n) \geq 0$ be a continuous nonnegative function with $f=f^{*_\eB}$ with some $\eB \in \mathbb{S}^{n-1}$ and assume $f$ satisfies an equation of the form
\be \label{eq:supp}
 f(x) = h(x) \left (f \ast \ldots \ast f \right ) (x)  \quad \mbox{for all $x \in \R^n$}, 
\ee
with $m$ factors in the convolution product on the left side and $h : \R^n \to (0, +\infty)$ is some continuous positive function. Then the set $\{ f > 0 \} \subset \R^n$ is connected. 
\end{lemma}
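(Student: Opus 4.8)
My plan is to reduce the connectedness claim for $\Omega := \{ f > 0 \}$ to a one-dimensional statement, exploiting the cylindrical symmetry $f = f^{*_\eB}$, and then to settle that one-dimensional statement by a Minkowski-translation argument. First, since $h > 0$, equation \eqref{eq:supp} gives $\Omega = \{ f \ast \cdots \ast f > 0 \}$, where the convolution has $m$ factors; this $m$-fold convolution is finite at every point (it equals $f/h$) and non-negative, so by repeating the elementary argument of Lemma~\ref{lem:convo} with $m$ factors — one inclusion using continuity and non-negativity of $f$ together with the openness of $\Omega$, the other using that the integrand defining $(f \ast \cdots \ast f)(x)$ vanishes identically once $x \notin \bigoplus_{k=1}^m \Omega$ — I obtain
$$
\Omega = \bigoplus_{k=1}^m \Omega .
$$

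Next I would unpack the structure of $\Omega$ coming from $f = f^{*_\eB}$. After rotating so that $\eB = \eB_1 = (1, 0, \dots, 0)$, write points as $x = (x_1, x') \in \R \times \R^{n-1}$. The identity $f = f^{*_1}$ says that for each fixed $x_1$ the slice $x' \mapsto f(x_1, x')$ is radially symmetric and non-increasing on $\R^{n-1}$, so by continuity its positivity set is an open ball; hence there is $\rho : \R \to [0, +\infty]$ with $\Omega = \{ (x_1, x') : |x'| < \rho(x_1) \}$. Set $I := \{ x_1 \in \R : \rho(x_1) > 0 \}$, the projection of $\Omega$ onto the $\eB$-axis, which is open and non-empty. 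Two facts now reduce everything to $I$: (i) the map $(x_1, x', t) \mapsto (x_1, (1-t) x')$ is a deformation retraction of $\Omega$ onto $\{ (x_1, 0) : x_1 \in I \} \cong I$, so $\Omega$ is connected iff $I \subseteq \R$ is connected; and (ii) applying the orthogonal projection onto $\R \eB$ to $\Omega = \bigoplus_{k=1}^m \Omega$, and using that a linear map commutes with Minkowski sums, gives $I = \bigoplus_{k=1}^m I$.

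It remains to prove the purely one-dimensional assertion: \emph{an open, non-empty $I \subseteq \R$ satisfying $I = \bigoplus_{k=1}^m I$ with $m \ge 2$ is connected}, in fact $I \in \{ (0, \infty), (-\infty, 0), \R \}$. Since $\sup \big( \bigoplus_{k=1}^m I \big) = m \sup I$ (read as $+\infty$ if $\sup I = +\infty$), the equation forces $\sup I \in \{ 0, +\infty \}$ and, symmetrically, $\inf I \in \{ -\infty, 0 \}$, and they are not both $0$ since $I$ is open. In the case $\inf I = 0$, $\sup I = +\infty$ (so $I \subseteq (0,\infty)$): choose $s_0 \in I$ with an interval $(s_0 - \delta_0, s_0 + \delta_0) \subseteq I$, and, using $\inf I = 0$, some $\eps \in I$ with an interval $(\eps - \delta', \eps + \delta') \subseteq I$ and $(m-1)\eps < \delta_0$; then the elementary consequence $\bigoplus_{k=1}^{1 + j(m-1)} I = I$ ($j \ge 0$) lets one Minkowski-sum the $s_0$-interval with $j$ ``blocks'' of $m-1$ copies of the $\eps$-interval, showing that $I$ contains intervals of half-width $\delta_0$ centred at $s_0 + j(m-1)\eps$ for all $j \ge 0$; since consecutive centres are within $(m-1)\eps < \delta_0$ of each other, the union of these intervals is $(s_0 - \delta_0, +\infty) \subseteq I$, and letting $s_0 \to 0^+$ gives $I = (0,\infty)$. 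The remaining case $\inf I = -\infty$, $\sup I = +\infty$ splits further: if $0 \in I$, a one-sided version of the same trick gives $I = \R$; if $\inf(I \cap (0,\infty)) = 0$, one first gets $(0,\infty) \subseteq I$ and then, Minkowski-adding a negative element of $I$, concludes $I = \R$; and the last possibility, where $0 \notin I$ and $I$ is separated from $0$ on both sides by a gap $(r_-, r_+) \ni 0$, is excluded because then $\bigoplus_{k=1}^m I$ would contain a point $j r_+ + (m-j) r_-$ (suitably perturbed) lying in $(r_-, r_+)$, contradicting $I \cap (r_-, r_+) = \emptyset$.

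I expect this last, one-dimensional step to be the real obstacle: as the authors remark, there is no apparent reason why an arbitrary open set equal to its $m$-fold Minkowski sum should be connected (and this is not known in $\R^n$ for $n \ge 2$), so the role of the cylindrical symmetry $f = f^{*_\eB}$ is exactly to permit the retraction onto the one-dimensional axis, where self-similarity under Minkowski addition can be leveraged directly through the translation argument above.
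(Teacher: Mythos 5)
Your proposal is correct and follows essentially the same route as the paper: first derive $\Omega = \bigoplus_{k=1}^m \Omega$ from the convolution equation via the analogue of Lemma~\ref{lem:convo}, then use the cylindrical structure $f = f^{*_\eB}$ to reduce connectedness of $\Omega$ to connectedness of its projection $I = \pi_1(\Omega)$ onto the $\eB$-axis, show $I = \bigoplus_{k=1}^m I$, and finally settle the one-dimensional problem. The minor variations are stylistic rather than structural. Where the paper verifies $\pi_1(\Omega) = \bigoplus_k \pi_1(\Omega)$ by appealing to the ``fan'' description $\Omega = \bigcup_{x_1 \in \pi_1(\Omega)} \{x_1\} \times B(0,\rho(x_1))$ (so that $(y_k, y_k') \in \Omega$ forces $(y_k, 0) \in \Omega$), you observe directly that a linear map commutes with Minkowski sums, which is slightly cleaner; and where the paper establishes connectedness of $\Omega$ by noting any two points are joined by an explicit path through the axis, you phrase the same fact as a deformation retraction of $\Omega$ onto $I \times \{0\}$. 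For the one-dimensional step, the paper isolates it as a separate lemma (Lemma~\ref{lem:Omega_1dim}) and in the ``mixed sign'' case first manufactures two rays $(-\infty,\underline y) \cup (\overline y,\infty) \subset \Omega$ via iterated Minkowski sums of a small ball, then chooses $c$ and $\tilde c = -(m-1)c$ on those rays to land on $0 \in \Omega$ directly, thereby avoiding your case split on the location of $0$ relative to $I$ and the ``perturbed $j r_+ + (m-j) r_-$'' argument; your version is sound but the perturbation step in the gap case (especially when $jr_+ + (m-j)r_-$ lands exactly on $r_\pm$) genuinely needs the care you only gesture at with ``(suitably perturbed)''.
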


\begin{proof}

Without loss of generality we can assume that $\eB = \eB_1 \in \mathbb{S}^{n-1}$ is the unit vector pointing in the $x_1$-direction. We denote the set
$$
\Omega = \{ x \in \R^n : f(x) > 0 \},
$$
where we assume that $\Omega \neq \emptyset$, since otherwise the result is trivially true. Let $\pi_1(\Omega)\subset \R \simeq \R \times \{0 \}$ be the projection of $\Omega \subset \R^n$ onto the $x_1$-axis, i.\,e., we set
$$
\pi_1(\Omega) = \{ x_1 \in \R : \mbox{$\exists x' \in \R^{n-1}$ with $(x_1, x') \in \Omega$} \}.
$$
Note that $\pi_1(\Omega)$ is an open subset of $\R$ because $\Omega \subset \R^n$ is open (by the continuity of $f$). 

Next, we recall that, for any $x_1 \in \R$ fixed, the sets $\{ x' \in \R^{n-1} : f(x_1, x')> 0 \}$ are open balls in $\R^{n-1}$ centered at the origin, due to the fact that $f=f^{\ast_1} \geq 0$, which implies that the map $x' \mapsto f(x_1, x')$ is radially symmetric in $\R^{n-1}$ and non-increasing in $|x'|$. Thus there exists a map
$$
\pi_1(\Omega) \to (0,+\infty], \quad x_1 \mapsto \rho(x_1)
$$
such that
$$
B_{\R^{n-1}}(0, \rho(x_1)) = \{ x' \in \R^{n-1} : f(x_1, x') > 0 \}
 $$
with the convention that $B_{\R^{n-1}}(0, +\infty) = \R^{n-1}$. In summary, we can write the set $\Omega$ in $\R^n$ as the union given by
\be \label{eq:Omega_union}
\Omega = \bigcup_{x_1 \in \pi_1(\Omega)} \{ x_1 \} \times B_{\R^{n-1}}(0,\rho(x_1))
\ee
with some strictly positive function $0 < \rho(x_1) \leq +\infty$ for $x_1 \in \pi_1(\Omega)$. 

Now, from the assumed equation satisfied by $f$, we deduce the set equality
\be \label{eq:Omega_mink}
\Omega = \bigoplus_{k=1}^m \Omega.
\ee
We claim that this implies that
\be \label{eq:pi_1}
\pi_1(\Omega) = \bigoplus_{k=1}^m \pi_1(\Omega) .
\ee
Indeed, the inclusion $\oplus_{k=1}^m \pi_1(\Omega) \subset \pi_1(\Omega)$ follows trivially from \eqref{eq:Omega_mink}. To see the reverse inclusion, let $x_1 \in \pi_1(\Omega)$ be given and thus $(x_1,0) \in \Omega$. By \eqref{eq:Omega_mink}, there exist points $(y_1, y_1'), \ldots, (y_m, y_m') \in \Omega$ such that $(x_1, 0) = (y_1, y_1') + \ldots + (y_m, y_m')$. However, from \eqref{eq:Omega_union} we deduce that $(y_1, 0), \ldots, (y_m,0) \in \Omega$ as well, whence it follows that $(x_1,0) = (y_1, 0) + \ldots + (y_m, 0)$. Therefore we have $\pi_1(\Omega) \subset \oplus_{k=1}^m \pi_1(\Omega)$ as claimed.

Finally, since $\pi_1(\Omega)$ is an open set in $\R$ satisfying \eqref{eq:pi_1},  we can invoke Lemma \ref{lem:Omega_1dim} below to deduce that we only can have the following three possibilities 
$$
\pi_1(\Omega) = \R, \quad \pi_1(\Omega) = (-\infty,0), \quad \mbox{or} \quad \pi_1(\Omega) = (0, +\infty).
$$
However, in either case, it is easy to see from \eqref{eq:Omega_union} that $\Omega$ must be connected, as any pair of points $(x_1, x') \in \Omega$ and $(y_1, y') \in \Omega$ can be connected by a continuous path in $\Omega$.

The completes the proof of Lemma \ref{lem:topo}.
\end{proof}

\subsection{Completing the Proof of Theorem \ref{thm:sym}}
Let $Q = Q_{\omega, \vB} \in H^s(\R^n)$ be a boosted ground state as in Theorem \ref{thm:sym}. 

It is elementary to check that $|Q|^{2 \sigma} Q \in L^1(\R^n)$ using that $\sigma \in (1, \sigma_*)$. Hence by \eqref{eq:Qground} and taking the Fourier transform, we conclude that $\widehat{Q}(\xi) = \frac{1}{p_\vB(\xi) + \omega} \widehat{(|Q|^{2 \sigma} Q)}(\xi)
$ is a continuous function due to the assumed continuity of $p(\xi)$. Next, by Lemma \ref{lem:SteinerFourier} and \ref{lem:LpSteiner}, we conclude that $Q^{\sharp_1}$ is also a boosted ground state and it must hold that
$$
|\widehat{Q}(\xi)| = (\widehat{Q}(\xi))^{*_{\eB}}  \quad \mbox{for all $\xi \in \R^n$}.
$$
By  writing the equation \eqref{eq:Q} in Fourier space, we find that  the set
$$
\Omega = \{ \widehat{Q}^{*_1} > 0 \} = \{ |\widehat{Q}(\xi)| > 0 \}
$$ 
is  a connected set in $\R^n$ by using Lemma \ref{lem:topo} with $f = |\widehat{Q}|^{*_1}$ and $h = (p_\vB(\xi)+ \omega)^{-1}$.

Finally, since $Q$ and $Q^{*_\eB}$ are both boosted ground states, we must also have the equality $\| Q \|_{L^p} = \| Q^{*_\eB} \|_{L^p}$. We can now invoke Lemma \ref{lem:HLM} to deduce that
$$
\widehat{Q}(\xi) = e^{\ii (\alpha + \beta \cdot \xi)} \widehat{Q}^{*_\eB}(\xi) \quad \mbox{for all $\xi \in \R^n$},
$$
with some constants $\alpha \in \R$ and $\beta \in \R^n$. Hence it follows that $Q(x) = e^{\ii \alpha} Q^{\sharp_1}(x+x_0)$ for almost every $x \in \R^n$, where $\alpha \in \R$ and $x_0 \in \R^n$ are some constants.

The proof of Theorem \ref{thm:sym} is now complete. \hfill $\qed$

\section{Proof of Theorem \ref{thm:conj}}

Let the hypotheses of Theorem \ref{thm:conj} be satisfied and suppose $Q=Q_{\omega, \vB} \in H^{s}(\R)$ is a boosted ground state. As before, we consider the set
$$
\Omega = \{ \xi \in \R: |\widehat{Q}(\xi)| > 0 \} .
$$
Similarly as in the proof of Theorem \ref{thm:sym}, we conclude that $\widehat{Q}$ is a continuous function (and hence $\Omega$ is open). Moreover, it is elementary to see that (using that $\sigma \in \N$)
$$
\langle Q^\bullet, P(D) Q^\bullet \rangle \leq \langle Q, P(D) Q \rangle \quad \mbox{and} \quad \| Q \|_{L^{2 \sigma+2}} \leq \| Q^\bullet \|_{L^{2 \sigma+2}} ,
$$
see \cite{BuLeScSo-19}[Lemma 2.1]. Hence  we conclude that $Q^\bullet \in H^s(\R)$ is also a boosted ground state with $\| Q \|_{L^2} = \| Q^\bullet \|_{L^2}$. Furthermore, by arguing in the same way as in the proof of Theorem \ref{thm:sym}, we deduce that
\be \label{eq:omega_mink}
\Omega = \bigoplus_{k=1}^{2 \sigma +1} \Omega,
\ee
which means that the set $\Omega \subset \R$ is identical to its $(2\sigma+1)$-fold Minkowski sum. Using the one-dimensionality of the problem, we can now prove the following auxiliary result.

\begin{lemma}\label{lem:Omega_1dim}
Suppose $\Omega \subset \R$ is an open and non-empty set such that 
$$
\Omega = \bigoplus_{k=1}^m \Omega
$$
for some integer $m \geq 2$. Then it holds that 
$$
\Omega \in \{ \R_{>0}, \R_{<0}, \R \}.
$$
\end{lemma}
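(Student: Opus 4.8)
The plan is to first pin down $\inf\Omega$ and $\sup\Omega$ from the relation $\Omega=\bigoplus_{k=1}^m\Omega$, and then to show, in each of the resulting cases, that $\Omega$ has a single connected component; since an open interval $(\alpha,\beta)$ satisfies $(\alpha,\beta)=\bigoplus_{k=1}^m(\alpha,\beta)=(m\alpha,m\beta)$ only when $\alpha\in\{0,-\infty\}$ and $\beta\in\{0,+\infty\}$, this will give the conclusion. Using $\inf(A\oplus B)=\inf A+\inf B$ for nonempty $A,B$, the relation yields $\inf\Omega=m\inf\Omega$, hence $\inf\Omega\in\{0,-\infty\}$ since $\Omega\neq\emptyset$; symmetrically $\sup\Omega\in\{0,+\infty\}$. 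As $\Omega$ is open and nonempty it contains an interval, so $\inf\Omega<\sup\Omega$, and thus $(\inf\Omega,\sup\Omega)\in\{(0,+\infty),(-\infty,0),(-\infty,+\infty)\}$. It therefore suffices to prove $\Omega=\R_{>0}$, $\Omega=\R_{<0}$, $\Omega=\R$ respectively; the middle case follows from the first by the substitution $\Omega\mapsto-\Omega$ (which preserves openness, nonemptiness and $\Omega=\bigoplus_{k=1}^m\Omega$), so two cases remain.

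For the case $\inf\Omega=0$, $\sup\Omega=+\infty$, note $0\notin\Omega$ so $\Omega\subset(0,+\infty)$. I would decompose $\Omega$ into its open-interval components and distinguish whether some component has left endpoint $0$. If $(0,\beta)$ is a component with $\beta<\infty$, then $(0,m\beta)=(0,\beta)^{\oplus m}\subseteq\Omega^{\oplus m}=\Omega$ is a connected subset of $\Omega$ strictly containing $(0,\beta)$, contradicting maximality; hence $\beta=+\infty$, which forces $\Omega=(0,+\infty)$. If instead no component has left endpoint $0$, then since $\inf\Omega=0$ there are components with arbitrarily small positive left endpoint, and I would use the following extension mechanism: for components $C=(\alpha,\beta)$ and $C'=(\alpha',\beta')$ one has $C\oplus (C')^{\oplus(m-1)}=(\alpha+(m-1)\alpha',\beta+(m-1)\beta')\subseteq\Omega^{\oplus m}=\Omega$; choosing $C'$ with $(m-1)\alpha'<\beta-\alpha$ makes this interval overlap $C$, so their union $(\alpha,\beta+(m-1)\beta')$ is a connected subset of $\Omega$ strictly larger than $C$ whenever $\beta<\infty$ — contradicting maximality. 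Hence every component is a half-line $(\alpha_i,+\infty)$; two distinct such cannot be disjoint, so $\Omega$ has exactly one component $(\alpha_1,+\infty)$ with $\alpha_1>0$, contradicting $\inf\Omega=0$. So this sub-case is vacuous and $\Omega=(0,+\infty)$.

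For the case $\inf\Omega=-\infty$, $\sup\Omega=+\infty$, the key point is that once $\Omega$ meets both half-lines, the iterated Minkowski sum fills everything out. Fix $-p\in\Omega$ with $p>0$, $q\in\Omega$ with $q>0$, and $\eps>0$ with $(-p-\eps,-p+\eps)\cup(q-\eps,q+\eps)\subseteq\Omega$; iterating $\Omega^{\oplus m}=\Omega$ gives $\Omega^{\oplus M}=\Omega$ for $M=m^{j}$, any $j\geq1$. Taking $k$ factors from the first small interval and $M-k$ from the second, $0\le k\le M$, produces the open intervals $J_k:=\bigl(Mq-k(p+q)-M\eps,\ Mq-k(p+q)+M\eps\bigr)\subseteq\Omega$, whose centers are spaced $p+q$ apart; once $j$ is large enough that $2M\eps>p+q$, consecutive $J_k$ overlap, so $\bigcup_{k=0}^M J_k\supseteq(-Mp-M\eps,\ Mq+M\eps)\subseteq\Omega$, and letting $j\to\infty$ gives $\Omega=\R$.

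I expect the main obstacle to be not any conceptual depth but the bookkeeping in the first case: correctly ruling out an infinite family of components accumulating at $0$, and treating separately the degenerate possibility that some component already has $0$ as its left endpoint. Everything else reduces to elementary manipulations of Minkowski sums of intervals, together with the fact that an open subset of $\R$ is a disjoint union of open intervals (so "connected" means "a single interval").
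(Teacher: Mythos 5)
Your proof is correct and takes essentially the same approach as the paper's: pin down $\inf\Omega$ and $\sup\Omega$ from the Minkowski relation, then use $(m-1)$-fold sums of small elements near $\inf\Omega$ to rule out gaps in the one-sided case, and iterate the Minkowski sum to fill out $\R$ in the two-sided case. The differences (your slicker initial reduction via $\inf(A\oplus B)=\inf A+\inf B$, your decomposition into connected components rather than the paper's single-gap contradiction, and your direct chain of overlapping intervals versus the paper's detour through establishing $0\in\Omega$) are presentational rather than substantive.
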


\begin{remark*}
For higher dimensions $\Omega \subset \R^n$ when $n \geq 2$, we conjecture that $\Omega$ is always a connected set.
\end{remark*}

\begin{proof}
We split the proof into the following steps.

\medskip
\textbf{Step 1.} Let us first suppose that $\Omega \subset \R_{\geq 0}$ holds. We claim that we necessarily  have
\be
\Omega =  \R_{>0}.
\ee 
To see this, we first show that
\be \label{eq:OmInf}
\inf \Omega = 0.
\ee
Indeed, let us denote $x_* = \inf \Omega \geq 0$. For every $\eps > 0$, we can find $x \in \Omega$ such that $x_* \leq x < x_* + \eps$. Since $\Omega = \oplus_{k=1}^m \Omega$, we can find $x_1, \ldots, x_m \in \Omega$ such that $x = \sum_{k=1}^m x_k$ and, of course, we have $x_k \geq x_*$ for $k=1, \ldots, m$. Thus we conclude 
$$
m x_* \leq \sum_{k=1}^m x_k = x < x_* + \eps.
$$
Therefore we find that  
$$
(m-1) x_* < \eps.
$$
Since $\eps > 0$ is arbitrary, we deduce that \eqref{eq:OmInf} holds.

Next, we show that $\Omega$ is an open connected set in $\R$ (and hence it is an open interval since we are in one dimension). We argue by contradiction. Suppose $\Omega$ is not connected, i.\,e., we can find $x, y \in \Omega$ with $x < y$ and some $b \in (x,y)$ such that $b \not \in \Omega$. Moreover, since $\Omega$ is open, we can always arrange that $b$ is chosen such that
\be
(x,b) \subset \Omega \quad \mbox{and} \quad b \not \in \Omega.
\ee
Recalling that $\inf \Omega = 0$ we can now find some $c \in \Omega$ with $0 < c < \frac{b-x}{m-1}$. Hence it follows
\be
x + (m-1) c < b \quad \mbox{and} \quad b + (m-1) c > b.
\ee
Thus there exists $d \in (x,b) \subset \Omega$ with $d + (m-1) c = b$. Since $\Omega = \oplus_{k=1}^m \Omega$, we deduce from this that we have $b \in \Omega$ too. But this is a contradiction. Hence the open set $\Omega \subset \R$ is connected, i.\,e., we have
$$
\Omega = (\inf \Omega, \sup \Omega) = (0, \sup \Omega)
$$
since $\inf \Omega = 0$. From the assumed Minkowski-sum property of $\Omega$ it is easy to see that $\sup \Omega = +\infty$. Thus we conclude $\Omega= (0, +\infty) = \R_{>0}$, provided that $\Omega \subset \R_{\geq 0}$ holds. Likewise, we can show that $\Omega = \R_{< 0}$ whenever $\Omega \subset \R_{\leq 0}$.

\medskip
\textbf{Step 2.} It remains to discuss the case when both $\Omega \cap \R_{\geq 0} \neq \emptyset$ and $\Omega \cap \R_{\leq 0} \neq \emptyset$. In this case, we first claim that there exist numbers $\underline{y} < 0$ and $\overline{y} > 0$ such that
\be \label{eq:intervals}
(-\infty, \underline{y}) \cup (\overline{y}, +\infty) \subset \Omega.
\ee
Indeed, by assumption on $\Omega$, exist real numbers $y_- < 0$ and $y_+ > 0$ such that $y_-, y_+ \in \Omega$. Since $\Omega$ is open, we find $B_\eps(y_-) \subset \Omega$ and $B_\eps(y_+) \subset \Omega$ for some $\eps >0$. Let us introduce the integer $m=2\sigma+1 \geq 2$. From the elementary fact $B_{r_1}(x_1) \oplus B_{r_2}(x_2) = B_{r_1+r_2}(x_1+x_2)$ for the Minkowski sum of two open balls together with \eqref{eq:omega_mink}, we deduce
$$
\bigoplus_{k=1}^m B_\eps(y_+) = B_{m\eps}(my_+) \subset \Omega.
$$
Using this fact inductively and \eqref{eq:omega_mink}, we obtain a sequence of intervals $\{ I_{n} \}_{n=1}^\infty$ with $I_n \subset \Omega$ that are given by the recursion formula
$$
\left \{ \begin{array}{ll} I_{n+1} = B_{(m-1)\eps}((m-1) y_+) \oplus I_n & \quad \mbox{for $n \geq 1$},\\
I_1 = B_\eps(y_+). \end{array} \right .
$$
Hence we have
\begin{align*}
I_{n+1} & = B_{(m-1)\eps}((m-1) y_+) \oplus B_\eps(y_+) \oplus_{k=1}^{m-1} B_{(m-1) \eps}((m-1) y_+) \\
& = B_{(n(m-1) + 1)\eps}((n(m-1) + 1) y_+).
\end{align*}
Now we claim that 
\be \label{eq:overlap}
I_{n+1} \cap I_{n+2} \neq \emptyset  \quad \mbox{for $n \geq n_0$},
\ee
where $n_0 \geq 1$ is sufficiently large. This is true if 
$$
((n+1)(m-1)+1)y_+ - ((n+1)(m-1)+1) \eps \leq (n(m-1)+1)y_+ + (n(m-1) +1) \eps,
$$
which in turn is equivalent to
$$
\left ( 2n + 1+ \frac{2}{m-1} \right ) \eps \geq y_+.
$$
Evidently, this holds if $n \geq n_0$ with some sufficiently large integer $n_0 \in \N$.

By \eqref{eq:overlap}, we deduce that $I = \cup_{n \geq N} I_{n+1} \subset \Omega$ is an (open) interval and it is elementary to check that $\sup I = +\infty$. Hence we conclude that $I = (\overline{y}, +\infty) \subset \Omega$ for some $\overline{y} > 0$. Likewise, we show that $(-\infty, \underline{y}) \subset \Omega$ for some $\underline{y} < 0$. This proves \eqref{eq:intervals}.

Finally, we pick a positive number $c > \max \{\overline{y}, -\underline{y} \}$ and note that $c \in (\overline{y}, +\infty) \subset \Omega$. Next, we define the negative number $\tilde{c} = -(m-1) c <0$. Since $m \geq 2$, we have that $\tilde{c} \in (-\infty, \underline{y}) \subset \Omega$. But by the Minowski-sum property of $\Omega$, we conclude that 
$$
0 = \tilde{c} +(m-1) c =  \tilde{c} + \sum_{k=1}^{m-1} c \in  \bigoplus_{k=1}^m \Omega = \Omega.
$$ 
Hence $0 \in \Omega$ and we deduce that $B_r(0) \subset \Omega$ for some $r > 0$, since $\Omega$ is open. By \eqref{eq:omega_mink} and the Minkowski sums of balls, this implies that $B_{mr}(0) \subset \Omega$. Thus by iteration we obtain 
$$
B_{Nmr}(0) \subset \Omega \quad \mbox{for any $N \in \N$}.
$$
By taking $N \in \N$ arbitrarily large, we conclude that $\Omega = \R$ holds.

The proof of Lemma \ref{lem:Omega_1dim} is now complete.
\end{proof}

With Lemma \ref{lem:Omega_1dim} at hand, we can now finish the proof of Theorem \ref{thm:conj} as follows. Since we must have equality $\| Q_{\omega, \vB} \|_{L^{2 \sigma+2}} = \| Q_{\omega, \vB}^\bullet \|_{L^{2 \sigma+2}}$ for any boosted ground state $Q_{\omega, \vB} \in H^{s}(\R)$, we deduce from Lemma \ref{lem:HLM} below that the conclusion of Theorem \ref{thm:conj} holds. \hfill $\qed$

\appendix
\section{Some Technical Results}

  \begin{lemma} [pqr Lemma; see {\cite{FrLiLo-86}}] \label{lem:pqr}
Let $(\Omega,\Sigma,\mu)$ be a measure space. Let $1\leq p<q<r\leq\infty$ and let $C_p, C_q, C_r > 0$ be positive constants. Then there exist constants $\eta, c>0$ such that, for any measurable function $f\in L^p_\mu(\Omega)\cap L^r_\mu(\Omega)$ satisfying
  \bes 
    \LpNmu{p}{f}^p\leq C_p, \quad \LpNmu{q}{f}^q\geq C_q,\quad \LpNmu{r}{f}^r\leq C_r,
  \ees
it holds that 
  \bes 
    d_f(\eta) := \mu(\{x\in \Omega;\  |f(x)|>\eta\}) \geq c.
  \ees
The constant $\eta>0$ only depends on $p, q, C_p, C_q$ and the constant $c>0$ only depends on $p,q,r, C_p, C_q, C_r.$  
\end{lemma}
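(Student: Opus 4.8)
The plan is to run the classical Chebyshev-plus-interpolation argument underlying the $pqr$ Lemma. Fix a threshold $\eta > 0$ to be chosen at the end, write $E_\eta = \{ x \in \Omega : |f(x)| > \eta \}$, and decompose the middle norm as
$$
C_q \leq \LpNmu{q}{f}^q = \int_{E_\eta} |f|^q \diff \mu + \int_{\Omega \setminus E_\eta} |f|^q \diff \mu .
$$
The first step is to bound the low part. Since $p < q$, on $\Omega \setminus E_\eta$ we have $|f|^q = |f|^{q-p} |f|^p \leq \eta^{q-p} |f|^p$, hence $\int_{\Omega \setminus E_\eta} |f|^q \diff \mu \leq \eta^{q-p} \LpNmu{p}{f}^p \leq \eta^{q-p} C_p$. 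I would then choose $\eta = \eta(p,q,C_p,C_q) > 0$ small enough that $\eta^{q-p} C_p \leq C_q / 2$, which is possible because $q - p > 0$; this leaves
$$
\int_{E_\eta} |f|^q \diff \mu \geq \frac{C_q}{2}.
$$
Observe that this $\eta$ depends only on $p, q, C_p, C_q$, as required by the statement.

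The second step converts this into a lower bound on $d_f(\eta) = \mu(E_\eta)$ using the upper bound on the top norm. Assume first $r < \infty$. Since $q < r$, H\"older's inequality on $E_\eta$ with conjugate exponents $r/q$ and $r/(r-q)$ gives
$$
\frac{C_q}{2} \leq \int_{E_\eta} |f|^q \diff \mu \leq \left( \int_{E_\eta} |f|^r \diff \mu \right)^{q/r} \mu(E_\eta)^{1 - q/r} \leq C_r^{q/r} \, \mu(E_\eta)^{1 - q/r} ,
$$
and rearranging yields $d_f(\eta) = \mu(E_\eta) \geq c$ with $c := \bigl( C_q / (2 C_r^{q/r}) \bigr)^{r/(r-q)} > 0$, which depends on all of $p, q, r, C_p, C_q, C_r$ (through $\eta$ and directly), exactly as claimed.

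For the endpoint $r = \infty$ the argument is even shorter: the hypothesis is then naturally read as $\LpNmu{\infty}{f} \leq C_r$, so $\int_{E_\eta} |f|^q \diff \mu \leq \LpNmu{\infty}{f}^q \, \mu(E_\eta) \leq C_r^q \, \mu(E_\eta)$, and combining with $\int_{E_\eta} |f|^q \diff \mu \geq C_q/2$ gives $d_f(\eta) \geq c := C_q / (2 C_r^q)$. The whole argument is elementary, so there is no genuine obstacle; the only point requiring a little care is the bookkeeping of constants — in particular checking that $\eta$ can be chosen independently of $r$ and $C_r$ — together with the separate treatment of the $r = \infty$ case just indicated.
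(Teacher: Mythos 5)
Your proof is correct and complete, and it is the standard argument for the $pqr$ Lemma. The paper itself gives no proof, simply citing Lemma~2.1 of Fr\"ohlich--Lieb--Loss, and your decomposition into the low part (controlled by the $L^p$ bound to fix $\eta$) followed by H\"older on the superlevel set (controlled by the $L^r$ bound to extract $c$), together with the separate treatment of $r=\infty$, is exactly the argument found there.
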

\begin{proof} 
See \cite[Lemma 2.1]{FrLiLo-86}.
\end{proof}

\begin{lemma}[Compactness modulo translations in $\dot{H}^s(\R^n)$; see {\cite{BeFrVi-14}}] \label{lem:lieb-compact}
Let $s>0,$ $1<p<\infty$ and $(u_j)_{j\in\N}\subset \dot{H}^s(\R^n)\cap L^p(\R^n)$ be a sequence with
  \bes
    \sup_{j\in\N} \left( \dHsN{s}{u_j} + \LpN{p}{u_j} \right) < \infty,
  \ees
and, for some $\eta, c>0$ (with $|\cdot|$ being Lebesgue measure)
  \bes
     \inf_{j\in\N} |\{ x\in\R^n;\ |u_j(x)|>\eta \}| \geq c. 
  \ees
Then there exists a sequence of vectors $(x_j)_{j\in\N}\subset \R^n$ such that the
translated sequence $u_j(x+x_j)$ has a subsequence that converges weakly in $\dot{H}^s(\R^n)\cap L^p(\R^n)$ to a nonzero function $u\nequiv 0.$
\end{lemma}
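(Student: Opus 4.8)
The plan is to run a concentration--compactness argument of Lieb's type. The two hypotheses, namely the uniform $\dot H^s\cap L^p$ bound and the uniform lower bound on the measure of $\{|u_j|>\eta\}$, together rule out the ``vanishing'' alternative, so that after translating each $u_j$ appropriately a fixed amount of $L^p$-mass stays inside a ball of radius one; this mass then persists in the weak limit by local compactness, producing the desired nonzero function.

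First I would observe that $|\{|u_j|>\eta\}|\ge c$ forces $\LpN{r}{u_j}^r\ge \eta^r c$ for every $r$ for which $u_j\in L^r$. Choosing $r\in(p,p_s^*)$, where $p_s^*=2n/(n-2s)$ if $s<n/2$ and $r\in(p,\infty)$ is arbitrary otherwise, the Sobolev embedding $\dot H^s\hookrightarrow L^{p_s^*}$ and interpolation between $L^p$ and $L^{p_s^*}$ give $\LpN{r}{u_j}\lesssim \LpN{p}{u_j}^{1-\theta}\dHsN{s}{u_j}^{\theta}$, which is uniformly bounded. Hence $0<\inf_j\LpN{r}{u_j}\le\sup_j\LpN{r}{u_j}<\infty$. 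Next I would invoke a Lions-type vanishing lemma adapted to the homogeneous space: if
\[
\sup_{y\in\R^n}\int_{B_1(y)}|u_j(x)|^p\,dx\longrightarrow 0\qquad(j\to\infty),
\]
then $\LpN{r}{u_j}\to 0$. This is proved by covering $\R^n$ with a lattice of unit cubes, applying on each cube a local Gagliardo--Nirenberg inequality estimating $\LpN{r}{u_j}$ over the cube by a power of the local $L^p$-norm times a power of an $\dot H^s$-type quantity localised to a neighbourhood of the cube, raising to the power $r$, summing over cubes, and using that the localised $\dot H^s$-quantities sum (with bounded overlap) to a controlled multiple of $\dHsN{s}{u_j}^2$. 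Since this contradicts the lower bound just obtained, there exist $\beta>0$, a subsequence, and points $y_j\in\R^n$ with $\int_{B_1(y_j)}|u_j|^p\ge\beta$.

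I would then set $v_j:=u_j(\cdot+y_j)$, which is still bounded in $\dot H^s(\R^n)\cap L^p(\R^n)$ and satisfies $\int_{B_1(0)}|v_j|^p\ge\beta$. After passing to a further subsequence, $v_j\weakto v$ weakly in $\dot H^s(\R^n)\cap L^p(\R^n)$. Using a cut-off $\chi\in C_c^\infty(\R^n)$ with $\chi\equiv 1$ on $B_1(0)$ and the compactness of the embedding $H^\tau\hookrightarrow L^p_{\mathrm{loc}}$ on bounded domains (for a suitable $\tau\in(0,s]$), one gets $v_j\to v$ strongly in $L^p(B_1(0))$, whence $\int_{B_1(0)}|v|^p\ge\beta>0$ and in particular $v\nequiv 0$. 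Taking $x_j:=y_j$ then completes the argument.

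The main obstacle is precisely the one created by working with the \emph{homogeneous} space $\dot H^s$ rather than $H^s$, together with the absence of any restriction such as $p\ge 2$ or $s<n/2$. In the vanishing step the estimate must be arranged so that only the global $\dot H^s$-seminorm enters (there is no $L^2$-norm to spare), which calls for a Littlewood--Paley decomposition or a careful partition-of-unity bookkeeping of the fractional seminorm. In the last step one cannot in general take $\tau=s$, since when $p<2$ the functions $v_j$ need not lie in $L^2_{\mathrm{loc}}$ a priori; instead one combines the local $L^p$-bound with the $\dot H^s$-bound to control $H^\tau_{\mathrm{loc}}$ for some small $\tau>0$ and then applies Rellich--Kondrachov. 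This is exactly what is carried out in \cite{BeFrVi-14}; once $\tau$ is fixed, the interpolation and the extraction of the weak limit are routine.
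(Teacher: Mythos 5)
The paper gives no proof of this lemma at all: its entire ``proof'' is the citation to \cite[Lemma~2.1]{BeFrVi-14}, so there is no internal argument to compare your sketch against. That said, the route you propose (two-sided $L^r$ bounds by interpolation, a Lions-type vanishing lemma for $\dot H^s\cap L^p$, then translation and local compactness) is not the one used in \cite{BeFrVi-14}. There the argument is an adaptation of Lieb's original compactness lemma: one works directly with the superlevel set $\{|u_j|>\eta\}$, whose measure is pinched between $c$ and $\eta^{-p}\sup_j\|u_j\|_{L^p}^p$, and uses the $\dot H^s$ bound to show that this set cannot be thinly spread over $\R^n$, from which a concentration center is extracted. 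This avoids having to establish a global Lions vanishing lemma in the homogeneous fractional scale, which is precisely the most delicate part of your sketch.

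Two points in your sketch are genuine gaps rather than ``routine.'' In the vanishing step, the standard cube-covering proof uses $\sum_Q\|u\|_{H^1(Q)}^2\lesssim\|u\|_{H^1}^2$; here there is no $L^2$ term available, and for $s>1$ the localized Gagliardo-type quantities over cubes do not sum up to the global $\dot H^s$ seminorm, so one truly needs a Littlewood--Paley or paraproduct version and a careful choice of interpolation exponent $\theta$ so that $r\theta$ matches a summable power. In the final step, the quantity you wish to pass to the limit is $\int_{B_1}|v_j|^p$; local Rellich only yields strong $L^q(B_1)$ convergence for some $q$, and upgrading this to strong $L^p(B_1)$ convergence needs uniform integrability of $|v_j|^p$, which you get from the Sobolev bound only for $p$ strictly below the critical exponent (and requires a separate remark when $s\geq n/2$). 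Neither gap is fatal, but a self-contained proof must address both explicitly; the Lieb-type argument of \cite{BeFrVi-14} sidesteps the first entirely, which is why it is the reference of choice here.
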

\begin{proof} See \cite[Lemma 2.1]{BeFrVi-14}. 
\end{proof}

\begin{lemma}[Brascamp--Lieb--Luttinger Inequality] \label{lem:BLL}
Let $d \geq 1$ and $m \geq 2$ be integers. Suppose that  $u_1, u_2, \dots, u_m: \R^d\to \R_+$ are
nonnegative measurable functions vanishing at infinity. Let $1 \leq k\leq m$ and $B=[b_{ij}]$ be a given $k\times m$ matrix 
 (with $1\leq i\leq k,$ $1\leq j\leq m).$ If we define
  \be \label{eq:quantity-in-BLL-corresponding-to-convolution-at-zero}
  I_d[u_1,\dots, u_m] := \int_{\R^d} \cdots \int_{\R^d} 
  \prod_{j=1}^{m} u_j\bigg(\sum_{i=1}^{k} b_{ij} y^i \bigg)\diff y^1 \cdots \diff y^{k},
  \ee
then it holds that
  \bes
  I_d[u_1,\dots, u_m] \leq I_d[u_1^*,\dots, u_m^*],
  \ees
  where $*$ denotes the symmetric-decreasing rearrangement in $\R^d$.
 \end{lemma}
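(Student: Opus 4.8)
The plan is to prove Lemma~\ref{lem:BLL} by the classical three-stage reduction going back to Brascamp, Lieb, and Luttinger: first reduce to indicator functions via the layer-cake formula, then reduce from $\R^d$ to $\R$ by Steiner symmetrization in single coordinate directions, and finally settle the one-dimensional case for indicator functions of sets by an approximation-and-sliding argument. For Step~1, note that $u \mapsto I_d[u_1,\dots,u_m]$ is linear in each slot, and the layer-cake identity $u_j(x) = \int_0^\infty \chi_{\{u_j > t\}}(x)\,\diff t$ together with Tonelli's theorem yields
\bes
I_d[u_1,\dots,u_m] = \int_0^\infty \!\! \cdots \int_0^\infty I_d\big[\chi_{\{u_1 > t_1\}},\dots,\chi_{\{u_m > t_m\}}\big]\,\diff t_1 \cdots \diff t_m .
\ees
Since each $u_j$ vanishes at infinity, the level sets $\{u_j > t_j\}$ have finite measure, and one has $\{u_j > t_j\}^* = \{u_j^* > t_j\}$. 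Applying the same formula to the $u_j^*$, it suffices to establish $I_d[\chi_{A_1},\dots,\chi_{A_m}] \leq I_d[\chi_{A_1^*},\dots,\chi_{A_m^*}]$ for arbitrary measurable $A_1,\dots,A_m \subset \R^d$ of finite measure; a routine truncation then further allows us to assume the $A_j$ are bounded.

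For Step~2, it is a classical fact that the symmetric-decreasing rearrangement of a bounded set in $\R^d$ is an $L^1$-limit of iterated Steiner symmetrizations in varying single coordinate directions, all staying inside a fixed compact slab; since for bounded sets the integrand in $I_d$ is supported in a fixed compact subset of $\R^{kd}$, dominated convergence shows that $I_d$ is continuous along such a sequence. Hence it is enough to prove that a single Steiner symmetrization does not decrease $I_d$: replace each $A_j$ by the set obtained by rearranging, for every fixed $\tilde a \in \R^{d-1}$, the slice $\{ s \in \R : (s,\tilde a) \in A_j\}$ to the interval of the same length centred at the origin. Writing $y^i = (y^i_1, \tilde y^i)$ with $\tilde y^i \in \R^{d-1}$, the argument of $u_j$ decouples as $\big(\sum_i b_{ij} y^i_1,\, \sum_i b_{ij}\tilde y^i\big)$, so Tonelli's theorem in the variables $\tilde y^1,\dots,\tilde y^k$ reduces the claim, fibre by fibre, to the one-dimensional inequality $I_1[\chi_{E_1},\dots,\chi_{E_m}] \leq I_1[\chi_{E_1^*},\dots,\chi_{E_m^*}]$ for measurable $E_j \subset \R$ of finite measure (the fibres being the one-dimensional slices of the $A_j$).

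For Step~3, the one-dimensional core, inner regularity of Lebesgue measure together with the multilinearity and continuity of $I_1$ reduces matters to the case in which each $E_j$ is a finite disjoint union of bounded open intervals, and one argues by induction on the total number $N$ of intervals. If $N > m$, some $E_{j_0}$ contains at least two intervals; one slides two of its components toward each other at equal speeds with the total length held fixed, and shows that along this one-parameter family $I_1$ is non-decreasing until two intervals merge, which lowers $N$ and lets the induction proceed. In the base case $N = m$, each $E_j$ is a single interval, and one runs the analogous continuous Steiner-type flow driving every interval toward the origin-centred configuration $E_j^*$, again checking that $I_1$ does not decrease.

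The hard part will be exactly this one-dimensional sliding analysis: one must differentiate $I_1$ along the sliding parameter and verify that the resulting boundary contribution has the correct sign, and this is the one place where the sign pattern of the matrix $B$ genuinely enters, after the layering and Fubini reductions have stripped away everything else. The cleanest way to organize it is to phrase the slide as a continuous symmetrization flow and to show directly that the derivative of $I_1$ is non-negative; all the remaining ingredients — the layer-cake formula, the Fubini decouplings, the Steiner-to-rearrangement limit, and the approximation of measurable sets by finite unions of intervals — are standard.
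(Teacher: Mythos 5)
The paper does not prove Lemma~\ref{lem:BLL}: it is quoted as a classical result, with the original reference (Brascamp--Lieb--Luttinger, \emph{J.~Funct.~Anal.}~17 (1974)) listed in the bibliography, so there is no ``paper's proof'' to compare against. Your overall three-stage plan --- layer-cake reduction to indicator functions, passage from $\R^d$ to $\R$ via Steiner symmetrization in single coordinate directions with Tonelli decoupling along fibres, and a one-dimensional case for finite unions of intervals --- is indeed the architecture of the classical proof, and Steps~1 and~2 are essentially correct (modulo routine care about degenerate matrices $B$, where the map $y\mapsto By$ is not injective and both sides may be $+\infty$).

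The genuine gap is in your description of the one-dimensional inductive step. You propose to hold $E_j$ fixed for $j\neq j_0$ and slide two components of $E_{j_0}$ toward each other, claiming $I_1$ is non-decreasing until two intervals merge. This is false. Take $d=1$, $m=2$, $k=1$, $B=[1\ 1]$, so that $I_1[\chi_{E_1},\chi_{E_2}]=|E_1\cap E_2|$. With $E_1=(-2,-1)\cup(1,2)$ and $E_2=(-2,-1)$, sliding the two components of $E_1$ toward each other at unit speed gives $E_1(t)=(-2+t,-1+t)\cup(1-t,2-t)$ and $I_1(t)=|E_1(t)\cap E_2|=1-t$, which is strictly \emph{decreasing} on $[0,1]$; yet the final rearranged value is $|(-1,1)\cap(-\tfrac12,\tfrac12)|=1$, so the BLL inequality is not violated --- your flow simply does not reach it. In the actual Brascamp--Lieb--Luttinger argument the monotone quantity is obtained by compressing \emph{all} intervals of \emph{all} the sets $E_1,\dots,E_m$ simultaneously toward the origin (scaling every center by a common factor $t\in[0,1]$ while keeping lengths fixed); the non-negativity of $\tfrac{d}{dt}I_1$ along that flow rests on a reflection identity involving the whole family of sets at once, and the induction on the number of intervals and the centering cannot be decoupled the way your sketch does.
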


We recall from \cite{LeSo-18} the following result.

\begin{lemma}[Equality in the Hardy-Littlewood Majorant Problem in $\R^n$] \label{lem:HLM}
 Let $n \geq 1$ and $p \in 2 \N \cup \{\infty\}$ with $p> 2$. Suppose that $f, g \in \mathcal{F}(L^{p'}(\R^n))$ with $1/p+1/p'=1$ satisfy the majorant condition
$$
|\widehat{f}(\xi)| \leq \widehat{g}(\xi) \quad \mbox{for a.\,e.~$\xi \in \R^n$.}
$$
In addition, we assume that $\widehat{f}$ is continuous and that $\{ \xi\in \R^n : |\widehat{f}(\xi)| > 0 \}$ is a connected set. Then equality
$$
\| f \|_{L^p} = \| g \|_{L^p}
$$
holds if and only if
$$
\widehat{f}(\xi) = e^{\ii (\alpha + \beta \cdot \xi)} \widehat{g}(\xi) \quad  \mbox{for all $\xi \in \R^n$},
$$
with some constants $\alpha \in \R$ and $\beta \in \R^n$.
\end{lemma}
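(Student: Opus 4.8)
The plan is to handle the two admissible ranges — $p=2m$ with an integer $m\ge 2$, and $p=\infty$ — on the same footing: transfer the problem to the Fourier side, express $\|\cdot\|_{L^p}^p$ as a multilinear expression in $\widehat f$, and extract the rigidity statement from the equality case of a triangle inequality, the connectedness of $\Omega:=\{\,\xi:|\widehat f(\xi)|>0\,\}$ being needed only in the finite-$p$ range. For $p=2m$ I would use the identity already exploited in the proof of Lemma~\ref{lem:LpSteiner}: for $h\in\FF(L^{p'}(\R^n))$ with $p'=\frac{2m}{2m-1}$,
$$
\|h\|_{L^{2m}}^{2m}=\FF(|h|^{2m})(0)=\bigl(\widehat h\ast\widehat{\overline h}\ast\cdots\ast\widehat h\ast\widehat{\overline h}\bigr)(0),
$$
with $m$ factors $\widehat h$ and $m$ factors $\widehat{\overline h}$, where $\widehat{\overline h}(\xi)=\overline{\widehat h(-\xi)}$. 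Unfolding the convolution at the origin and renaming the $\overline h$-variables, this equals a fixed Lebesgue-type measure $d\sigma$ on the affine subspace $\Gamma:=\{(u,v)\in(\R^n)^m\times(\R^n)^m:\ u_1+\cdots+u_m=v_1+\cdots+v_m\}$, integrated against $(u,v)\mapsto\prod_{j=1}^m\widehat h(u_j)\,\overline{\widehat h(v_j)}$. Applied to $f$, together with $0\le|\widehat f|\le\widehat g$ and $\widehat g\ge 0$, this yields
$$
\|f\|_{L^p}^p=\Bigl|\int_\Gamma\textstyle\prod_j\widehat f(u_j)\overline{\widehat f(v_j)}\,d\sigma\Bigr|\le\int_\Gamma\textstyle\prod_j|\widehat f(u_j)|\,|\widehat f(v_j)|\,d\sigma\le\int_\Gamma\textstyle\prod_j\widehat g(u_j)\,\widehat g(v_j)\,d\sigma=\|g\|_{L^p}^p,
$$
which both reproves the majorant inequality and isolates the two mechanisms by which equality can be lost.

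Assume now $\|f\|_{L^p}=\|g\|_{L^p}$. Equality in the second inequality, combined with the pointwise bound $|\widehat f|\le\widehat g$, forces $|\widehat f|=\widehat g$ a.e.; this is the classical part of the argument (see \cite{LeSo-18}). Equality in the triangle inequality forces the integrand above to have a constant argument $\sigma$-a.e.\ on $\Gamma\cap\Omega^{2m}$, the set off which it vanishes. Since $\widehat f$ is assumed continuous, this integrand is continuous and nonvanishing on $\Gamma\cap\Omega^{2m}$, which is a relatively open subset of the Euclidean space $\Gamma$; hence the subset where the argument deviates from the constant is open and $\sigma$-null, therefore empty. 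So the argument equals some fixed $e^{\ii\gamma}$ on \emph{all} of $\Gamma\cap\Omega^{2m}$; evaluating on the diagonal $u=v$, where the integrand is $\prod_j|\widehat f(u_j)|^2>0$, shows $\gamma=0$. Writing $\Psi:=\widehat f/|\widehat f|:\Omega\to\mathbb S^1$ — a genuine, globally well-defined $\mathbb S^1$-valued continuous function, since $\widehat f$ is continuous and nonvanishing on $\Omega$ — we obtain
\begin{equation}\label{eq:star}
\Psi(u_1)\cdots\Psi(u_m)=\Psi(v_1)\cdots\Psi(v_m)\qquad\mbox{whenever }u_j,v_j\in\Omega,\ \textstyle\sum_j u_j=\sum_j v_j.
\end{equation}

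The decisive step is deducing from \eqref{eq:star} that $\Psi$ has affine argument, and this is where connectedness of $\Omega$ is used. Fixing a small vector $h$ and choosing in \eqref{eq:star} the configuration $u_1=\xi+h,\ v_1=\xi,\ u_2=\eta,\ v_2=\eta+h$ and $u_j=v_j$ for $j\ge 3$ (admissible whenever $\xi,\xi+h,\eta,\eta+h\in\Omega$, which requires $m\ge2$), one finds $\Psi(\xi+h)\overline{\Psi(\xi)}=\Psi(\eta+h)\overline{\Psi(\eta)}$; thus the increment $D_h(\xi):=\Psi(\xi+h)\overline{\Psi(\xi)}$ is locally independent of the base point $\xi$, and chaining along paths in the connected open set $\Omega$ upgrades this to a single value $d(h)\in\mathbb S^1$. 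The cocycle relation $d(h_1+h_2)=d(h_1)d(h_2)$ is immediate, and $d$ is continuous, so $d(h)=e^{\ii\beta\cdot h}$ for some $\beta\in\R^n$; consequently $\Psi(\xi)e^{-\ii\beta\cdot\xi}$ is locally constant on $\Omega$, hence constant $=e^{\ii\alpha}$ by connectedness. This gives $\widehat f(\xi)=e^{\ii(\alpha+\beta\cdot\xi)}|\widehat f(\xi)|=e^{\ii(\alpha+\beta\cdot\xi)}\widehat g(\xi)$ on $\Omega$, and trivially off $\Omega$, where both sides vanish. The converse implication is immediate, since that form makes $f$ a phase rotate and translate of $g$. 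For $p=\infty$ (so $p'=1$, $\widehat f,\widehat g\in L^1$, and $f,g$ continuous vanishing at infinity) the argument is shorter and needs no connectedness: from $\|f\|_{L^\infty}\le(2\pi)^{-n/2}\|\widehat f\|_{L^1}\le(2\pi)^{-n/2}\|\widehat g\|_{L^1}=g(0)\le\|g\|_{L^\infty}$, equality throughout forces $|\widehat f|=\widehat g$ a.e.\ and forces the $L^\infty$-norm of $f$ to be attained at a point $x_0$ with $\xi\mapsto\widehat f(\xi)e^{\ii x_0\cdot\xi}$ of constant argument $e^{\ii\alpha}$, i.e.\ $\widehat f(\xi)=e^{\ii(\alpha-x_0\cdot\xi)}\widehat g(\xi)$, which is the asserted form with $\beta=-x_0$.

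I expect the main obstacle to be precisely the passage \eqref{eq:star}$\,\Rightarrow\,$affine argument: the connectedness hypothesis on $\Omega$ is genuinely indispensable (a disconnected $\Omega$ admits solutions $\Psi$ of \eqref{eq:star} that are affine with different offsets on different components), and one must carry the cocycle $d$ along chains of overlapping neighborhoods covering $\Omega$ while remembering that $\Psi$ is only $\mathbb S^1$-valued, so that no global continuous logarithm of $\Psi$ need exist and only the $\mathbb S^1$-valued datum is available.
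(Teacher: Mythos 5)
The paper does not prove this lemma; it is imported verbatim from \cite{LeSo-18} (the appendix opens with ``We recall from \cite{LeSo-18} the following result''), so there is no in-paper proof to compare against. Your reconstruction is nonetheless essentially the argument in that reference: the multilinear convolution identity $\|h\|_{L^{2m}}^{2m}=(\widehat h\ast\widehat{\overline h}\ast\cdots)(0)$, the equality case of the triangle inequality pinning the argument of the integrand on $\Gamma\cap\Omega^{2m}$, and the connectedness of $\Omega$ used to upgrade the resulting local character structure of $\Psi=\widehat f/|\widehat f|$ to a global affine phase. The converse direction and the $p=\infty$ case are handled correctly, and you rightly observe that for $p=\infty$ the connectedness hypothesis plays no role.

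Two points deserve more care than you give them. First, ``equality in the second inequality forces $|\widehat f|=\widehat g$ a.e.'' is not a direct pointwise consequence of equality of a single multilinear integral: one needs to telescope $\prod_j\widehat g-\prod_j|\widehat f|$ into a sum of nonnegative pieces and argue via Fubini on the slices, and some bookkeeping about supports is genuinely required there. You flag it as ``classical'' and defer to \cite{LeSo-18}, which is legitimate for a recollection, but the step is not automatic. Second, the claim that the cocycle relation $d(h_1+h_2)=d(h_1)d(h_2)$ is ``immediate'' is imprecise: $d$ is a priori defined only on $\Omega\ominus\Omega$, which is in general a bounded neighborhood of the origin rather than all of $\R^n$, and verifying the cocycle for a given pair $(h_1,h_2)$ requires a single base point $\xi\in\Omega$ with $\xi+h_1,\ \xi+h_1+h_2\in\Omega$, which need not exist for arbitrary admissible $h_1,h_2$. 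The robust version of your idea is the one you gesture at in the last sentence: show that $\Psi$ restricted to any small ball $B\subset\Omega$ has affine argument (via the local character structure and the classification of continuous characters of $\R^n$), observe that the slope $\beta$ agrees on overlaps of such balls, and then invoke connectedness of $\Omega$ to conclude that both the slope and the offset are global. That avoids ever needing $d$ on all of $\R^n$.
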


\bibliographystyle{amsplain}


\end{document}